\newcommand{\forevery}[2]{\null\hfilneg\llap{$\forall#1\quad\qquad$}\hfil#2}
\let\de=\partial
\newcommand{\C}{\mathbb{C}}
\newcommand{\R}{\mathbb{R}}
\newcommand{\N}{\mathbb{N}}
\newcommand{\D}{\mathbb{D}}
\renewcommand{\H}{\mathbb{H}}
\let\phe=\varphi
\let\eps=\varepsilon
\newcommand{\Hol}{\mathop{\mathrm{Hol}}\nolimits}
\newcommand{\Aut}{\mathop{\mathrm{Aut}}\nolimits}
\newcommand{\id}{\mathop{\mathrm{id}}\nolimits}
\newcommand{\cancel}[2]{\ooalign{$\hfil#1/\hfil$\crcr$#1#2$}}
\newcommand{\void}{\mathord{\mathpalette\cancel{\mathrel{\hbox{\kern0pt\raise0.8pt\hbox
	{$\scriptstyle\bigcirc$}}}}}}
\newtheorem{Theorem}{Theorem}[section]
\newtheorem{Corollary}[Theorem]{Corollary}
\newtheorem{Proposition}[Theorem]{Proposition}
\theoremstyle{definition}
\newtheorem{Definition}[Theorem]{Definition}
\theoremstyle{remark}
\newtheorem{Remark}[Theorem]{Remark}
\theoremstyle{remark}
\newtheorem{Example}[Theorem]{Example}
\title{Random iteration on hyperbolic Riemann surfaces}
\author{Marco Abate\thanks{Partially supported by 2017 PRIN grant ``Real and Complex Manifolds: Topology, Geometry and Holomorphic Dynamics", Ministry of University and Research, Italy, and by 2020 PRA grant ``Sistemi dinamici in logica, geometria, fisica matematica e scienza delle costruzioni", University of Pisa, Italy.}}
\affil{Dipartimento di Matematica, Universit\`a di Pisa, Largo Pontecorvo 5, I-56127 Pisa, Italy. E-mail: marco.abate@unipi.it}
\author{Argyrios Christodoulou}
\affil{School of Mathematical Sciences, Queen Mary University of London, Mile End Road,
London E1 4NS, United Kingdom. E-mail: argyrios.christodoulou@qmul.ac.uk}
\date{September 19, 2021}
\begin{document}

\maketitle

\smallskip

\begin{abstract}
Let $\{f_\nu\}\subset\Hol(X,X)$ be a sequence of holomorphic self-maps of a hyperbolic Riemann surface~$X$. In this paper we shall study the asymptotic behavior of
the sequences obtained by iteratively left-composing or right-composing the maps~$\{f_\nu\}$; the sequences of self-maps of~$X$ so obtained are called left (respectively, right) iterated function systems. We shall obtain the analogue for left iterated function systems of the theorems proved by Beardon, Carne, Minda and Ng for right iterated function systems with value in a Bloch domain; and we shall extend to the setting of general hyperbolic Riemann surfaces results obtained by Short and the second author in the unit disk~$\D$ for iterated function systems generated by maps close enough to a given self-map.
\end{abstract}

\section{Introduction}
\label{sec:0}

A classical result in one variable holomorphic dynamics is the \emph{Wolff-Denjoy theorem:}

\begin{Theorem}[Wolff \cites{Wolff1926a, Wolff1926b, Wolff1926c}, Denjoy \cite{Denjoy1926}; 1926]
\label{th:0.WD}
Let $f\in\Hol(\D,\D)$ be a holomorphic self-map of the unit disk $\D\subset\C$, not an elliptic automorphism. Then there exists a point $\tau\in\overline{\D}$ such that the sequence $\{f^k\}$ of iterates of~$f$ converges, uniformly on compact subsets, to the constant~$\tau$.
\end{Theorem}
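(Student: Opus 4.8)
\bigskip
\noindent\textbf{Proof proposal.}\quad
The plan is to exhibit one point $\tau\in\overline{\D}$ (\emph{Wolff's point}) to which every iterate converges, treating separately the cases in which $f$ does or does not have a fixed point in~$\D$. Write $\omega_\D$ for the Poincar\'e distance of~$\D$ and recall the Schwarz--Pick lemma: every $g\in\Hol(\D,\D)$ satisfies $\omega_\D(g(z),g(w))\le\omega_\D(z,w)$, with equality for some pair $z\ne w$ only if $g\in\Aut(\D)$. Recall also that $\Hol(\D,\D)$ is a normal family (Montel), so every subsequence of $\{f^k\}$ has a locally uniformly convergent subsequence whose limit is either a self-map of~$\D$ or a constant in~$\overline{\D}$; hence it suffices to show that \emph{every} such subsequential limit is the constant~$\tau$. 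If $f$ has a fixed point $\tau\in\D$, then $f$ is not an automorphism (an automorphism fixing an interior point is elliptic), so by the rigidity of Schwarz--Pick $\omega_\D(f(z),\tau)<\omega_\D(z,\tau)$ for every $z\ne\tau$; a standard compactness argument (the contraction being uniform on compact subsets of $\D\setminus\{\tau\}$) then gives $\omega_\D(f^k(z),\tau)\to0$ locally uniformly, i.e.\ $f^k\to\tau$. So from now on I assume $f$ has no fixed point in~$\D$, which already rules out elliptic automorphisms.

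If moreover $f\in\Aut(\D)$, then $f$ extends to a parabolic or hyperbolic M\"obius transformation with fixed point(s) on $\partial\D$; conjugating $\D$ to the upper half-plane turns $f$ into $z\mapsto z+b$ or $z\mapsto\lambda z$, and in both cases $f^k$ converges locally uniformly to the boundary fixed point. So assume also $f\notin\Aut(\D)$. The core is \textbf{Wolff's lemma}: there is $\tau\in\partial\D$ with $f\bigl(E(\tau,R)\bigr)\subseteq E(\tau,R)$ for all $R>0$, where $E(\tau,R):=\{z\in\D:|\tau-z|^2<R(1-|z|^2)\}$. To build $\tau$ I would fix $r_n\uparrow1$, apply Brouwer's fixed point theorem to $r_nf$ as a self-map of the closed disk $\overline{r_n\D}$ to get $w_n$ with $r_nf(w_n)=w_n$, pass to a subsequence with $w_n\to\tau\in\overline{\D}$, and observe that $\tau\in\partial\D$ (otherwise $f(\tau)=\lim r_nf(w_n)=\tau$ would be a forbidden interior fixed point). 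Feeding $z$ and $w_n$ into the Schwarz--Pick inequality for $f$, rewriting it through $1-\bigl|\tfrac{a-b}{1-\bar ba}\bigr|^2=\tfrac{(1-|a|^2)(1-|b|^2)}{|1-\bar ba|^2}$, and using $1-|f(w_n)|^2=1-|w_n|^2/r_n^2\le1-|w_n|^2$, one arrives at
\[
\frac{|1-\overline{f(w_n)}\,f(z)|^2}{1-|f(z)|^2}\ \le\ \frac{|1-\overline{w_n}\,z|^2}{1-|z|^2};
\]
letting $n\to\infty$, so that $w_n\to\tau$ and $f(w_n)=w_n/r_n\to\tau$ with $|\tau|=1$, and using $|1-\bar\tau w|=|\tau-w|$, this becomes $|\tau-f(z)|^2/(1-|f(z)|^2)\le|\tau-z|^2/(1-|z|^2)$ for every $z\in\D$, which is Wolff's lemma.

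For the endgame I would iterate Wolff's lemma: for each $z\in\D$ the forward orbit $\{f^k(z):k\ge0\}$ lies in $\overline{E(\tau,R(z))}$ with $R(z)=|\tau-z|^2/(1-|z|^2)$, and since the horoballs $E(\tau,R)$ shrink to $\{\tau\}$ as $R\to0^+$, any \emph{constant} subsequential limit of $\{f^k\}$ must equal~$\tau$. It remains to exclude a non-constant subsequential limit $g=\lim_jf^{k_j}$; such a $g$ is open, so $g(\D)\subseteq\D$. Fix $z_0\in\D$ and put $p:=g(z_0)\in\D$. By Schwarz--Pick the numbers $a_k:=\omega_\D\bigl(f^k(z_0),f^{k+1}(z_0)\bigr)$ form a non-increasing sequence, hence converge to some $c\ge0$; evaluating along $\{k_j\}$ and using continuity of $f$ and of $\omega_\D$ gives $c=\omega_\D\bigl(p,f(p)\bigr)$, which is positive because $f$ has no interior fixed point. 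Evaluating instead along each shifted sequence $\{k_j+n\}$ gives $\omega_\D\bigl(f^n(p),f^{n+1}(p)\bigr)=c$ for all $n\ge0$; in particular $\omega_\D\bigl(f(p),f^2(p)\bigr)=\omega_\D\bigl(p,f(p)\bigr)$ with $p\ne f(p)$, so the rigidity of Schwarz--Pick forces $f\in\Aut(\D)$ — contradicting $f\notin\Aut(\D)$. Hence every subsequential limit of $\{f^k\}$ is the constant~$\tau$, and the theorem follows.

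I expect the whole difficulty to be concentrated in the no--fixed--point case: producing Wolff's point~$\tau$ (the Brouwer argument combined with the limiting Schwarz--Pick estimate that yields the invariant horocycles), and above all ruling out non-constant subsequential limits of $\{f^k\}$ — there the monotonicity of the hyperbolic ``step length'' $\omega_\D(f^k(z_0),f^{k+1}(z_0))$ together with the rigidity of the Schwarz--Pick lemma is exactly what closes the argument, and packaging that step cleanly is the delicate point.
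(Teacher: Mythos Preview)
The paper does not prove this theorem: it is quoted in the introduction as the classical 1926 result of Wolff and Denjoy, used only as background and motivation for Heins' generalisations (Theorems~\ref{th:I.3.hypRiemDW} and~\ref{th:0.Heins}) and for the paper's own results on random iteration. There is therefore no proof in the paper to compare your proposal against.

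On its own merits your argument is the standard one and is correct. One small expository point: when you write that ``any constant subsequential limit of $\{f^k\}$ must equal~$\tau$'' on the strength of the horoball invariance alone, that only settles constant limits lying on $\partial\D$ (since $\overline{E(\tau,R)}\cap\partial\D=\{\tau\}$ while $\overline{E(\tau,R)}$ contains plenty of interior points). A constant limit $c\in\D$ is not excluded by the horoballs; but it \emph{is} excluded by exactly the step-length argument you give for non-constant limits---take $p=c$ and conclude either $f(c)=c$ (a forbidden interior fixed point) or equality in Schwarz--Pick (forcing $f\in\Aut(\D)$). So the issue is only in how you split the cases, not in the mathematics.
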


In this statement, an \emph{elliptic automorphism} is an automorphism of~$\D$ with a fixed point in~$\D$. Up to a conjugation an elliptic automorphism is a rotation, and so its dynamics is well understood; Theorem~\ref{th:0.WD} describes completely the dynamics of all other holomorphic self-maps of~$\D$. The point $\tau$ in the statement is the \emph{Wolff point} of the function~$f$.

This result has been generalised by Heins in 1941 to hyperbolic Riemann surfaces as follows:

\begin{Theorem}[Heins \cite{Heins1941a}; 1941]
\label{th:I.3.hypRiemDW}
Let $X$ be a hyperbolic Riemann surface, and 
let $f\in\Hol(X,X)$. Then either:
\begin{enumerate}
\item[\rm(i)] $f$ has an attracting fixed point in $X$, or
\item[\rm(ii)] $f$ is a periodic automorphism, or
\item[\rm(iii)] $f$ is a pseudoperiodic automorphism, or
\item[\rm(iv)] the sequence $\{f^k\}$ is compactly divergent.
\end{enumerate}
Furthermore, the case {\rm (iii)} can occur only if $X$~is either simply 
connected (and $f$~has a fixed point) or doubly connected (and $f$~has no 
fixed points).
\end{Theorem}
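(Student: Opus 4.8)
\emph{Proof proposal.} The plan is to work with the complete Poincar\'e metric of $X$, with distance $k_X$: by the Schwarz--Pick lemma every $g\in\Hol(X,X)$ is $1$-Lipschitz for $k_X$, and $X$ is taut, so any sequence in $\Hol(X,X)$ has either a subsequence converging, locally uniformly, to an element of $\Hol(X,X)$ or a compactly divergent subsequence. Applied to $\{f^k\}$ this gives case~(iv) or a subsequence $f^{k_j}\to h\in\Hol(X,X)$ with $k_j\uparrow\infty$. I would first dispose of the possibility that $h$ is a constant $\tau\in X$: passing to the limit in $f\circ f^{k_j}=f^{k_j}\circ f=f^{k_j+1}$ gives $f(\tau)=\tau$; the case $|f'(\tau)|=1$ is excluded, since then the equality case of Schwarz--Pick would make $f$ an automorphism and passing to the limit in $f^{-k_j}\circ f^{k_j}=\id_X$ would present $\id_X$ as a constant map; so $|f'(\tau)|<1$, $\tau$ is attracting, and since $f^{k_j}\to\tau$ locally uniformly, for large $j$ every compact set is carried by $f^{k_j}$ into a fixed attracting neighbourhood of $\tau$, whence $f^k\to\tau$ locally uniformly: case~(i). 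Hence I may assume from now on that \emph{no} subsequential limit of $\{f^k\}$ is constant.

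Fix a subsequence $f^{k_j}\to h$, with $h$ non-constant. After passing to a further subsequence I may assume the gaps $n_j:=k_{j+1}-k_j$ tend to $\infty$; then $f^{n_j}\circ f^{k_j}=f^{k_{j+1}}\to h$, and since a compactly divergent subsequence of $\{f^{n_j}\}$, composed with the convergent $\{f^{k_j}\}$, would push $\{f^{k_{j+1}}\}$ off every compact set, $\{f^{n_j}\}$ is not compactly divergent. Extract $f^{n_j}\to g\in\Hol(X,X)$ and pass to the limit — composition being continuous for locally uniform limits of self-maps — to get $g\circ h=h$; since $h$ is non-constant $h(X)$ is open, so $g=\id$ on $h(X)$ and hence $g=\id_X$ by the identity principle. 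Thus $f^{n_j}\to\id_X$ with $n_j\to\infty$. Iterating the Schwarz--Pick inequality from the first step, $k_X\bigl(f^{n_j}(x),f^{n_j}(y)\bigr)\le k_X\bigl(f(x),f(y)\bigr)\le k_X(x,y)$, and letting $j\to\infty$ the left side tends to $k_X(x,y)$; so $f$ is a $k_X$-isometry. Lifting to the universal covering $\pi\colon\D\to X=\D/\Gamma$, the lift $\tilde f$ is then an isometry of the Poincar\'e disk, i.e.\ $\tilde f\in\Aut(\D)$, and since any two lifts of $f$ differ by a deck transformation, $\tilde f\Gamma\tilde f^{-1}\subseteq\Gamma$. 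Moreover $f^{n_j}\to\id_X$ forces $f$ to be injective (Hurwitz), and an injective self-map cannot factor through a covering of degree $>1$; hence $\tilde f\Gamma\tilde f^{-1}=\Gamma$ and $f\in\Aut(X)$.

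It remains to locate $X$ and to decide between (ii) and (iii) for this automorphism $f$, which still satisfies $f^{n_j}\to\id_X$. Suppose first that $f$ has a fixed point $x_0$; then $|f'(x_0)|=1$ and $\Aut(X)_{x_0}$ embeds in $S^1$ via $\psi\mapsto\psi'(x_0)$. If $f'(x_0)$ is a root of unity then $f$ is periodic (case~(ii)). Otherwise $\overline{\langle f\rangle}$ is the full circle of rotations of $X$ about $x_0$; it lifts to a circle of rotations of $\D$ about a lift $\tilde x_0$ of $x_0$, and each of these centralises $\Gamma$ (the parameters for which it does form a closed subgroup of the circle containing a neighbourhood of the identity, hence the whole circle). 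Since a maximal torus of $\Aut(\D)$ is self-centralising, $\Gamma$ is a discrete torsion-free subgroup of that circle, hence trivial, so $X=\D$: this is case~(iii), with $X$ simply connected and $f$ having a fixed point. Suppose instead that $f$ has no fixed point. Then $\Gamma$ must be elementary: were $\Gamma$ non-elementary, its normaliser in $\Aut(\D)$ would be discrete, $\langle\Gamma,\tilde f\rangle$ would be discrete, the free $\langle f\rangle$-action on $X$ would make $X\to X/\langle f\rangle$ a genuine infinite cyclic covering, and $\{f^k\}$ would be compactly divergent, against our standing hypothesis. So $\Gamma$ is elementary and torsion-free, and since $\D$ itself is excluded (a fixed-point-free element of $\Aut(\D)$ never satisfies $\tilde f^{\,n_j}\to\id$), $X$ is $\D^*$ or an annulus — doubly connected — and $f$ is a fixed-point-free rotation, which is periodic (case~(ii)) or pseudoperiodic (case~(iii)). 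Reading off the cases also proves the final assertion, since (iii) occurred only for simply connected $X$ with a fixed point or for doubly connected $X$ with no fixed point.

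The soft inputs — tautness, Schwarz--Pick and its equality case, Hurwitz's theorem, the identity principle, and continuity of composition for locally uniform limits — are routine. The step I expect to be the real obstacle is the fixed-point-free subcase of the last paragraph: excluding it for all but the doubly connected surfaces genuinely needs the structure theory of Fuchsian groups — discreteness of the normaliser of a non-elementary group, the classification of elementary torsion-free Fuchsian groups, and the explicit form of $\Aut(\D^*)$ and of the automorphism group of an annulus — and some care is required to make the ``infinite cyclic covering forces compact divergence'' step precise.
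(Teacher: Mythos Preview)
The paper does not prove this theorem; it is stated in the introduction as a classical result of Heins~\cite{Heins1941a}, with a pointer to~\cite{Abate} for a modern treatment. So there is no proof in the paper to compare against.

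Your argument is essentially correct and follows the standard route (tautness, Schwarz--Pick, and analysis of subsequential limits). Two small points deserve tightening. First, in the constant-limit case, the line ``passing to the limit in $f^{-k_j}\circ f^{k_j}=\id_X$'' is not quite a proof, since $\{f^{-k_j}\}$ need not converge; the clean reason $f$ cannot be an automorphism there is that automorphisms are $\omega_X$-isometries, so $f^{k_j}\to\tau$ would give $\omega_X(x,y)=\lim\omega_X\bigl(f^{k_j}(x),f^{k_j}(y)\bigr)=0$ for all $x,y$. Second, in the no-fixed-point subcase your claim of a ``free $\langle f\rangle$-action on $X$'' is unwarranted---$f^k$ may well have fixed points even when $f$ does not---but you do not need freeness: discreteness of $\langle\Gamma,\tilde f\rangle$ already gives proper discontinuity on~$\D$, from which compact divergence of $\{f^k\}$ on $X$ follows directly.

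Your final paragraph can be substantially shortened by invoking the paper's Theorem~\ref{th:I.2.unocinque} (also due to Heins): if $\pi_1(X)$ is non-abelian then $\id_X$ is isolated in $\Hol(X,X)$, so $f^{n_j}\to\id_X$ forces $f^{n_j}=\id_X$ eventually, i.e.\ $f$ is periodic. This single observation confines case~(iii) to simply or doubly connected $X$, and the remaining split (fixed point in the simply connected case, no fixed point in the doubly connected case) then drops out of the explicit descriptions of $\Aut(\D)$, $\Aut(\D^*)$ and $\Aut\bigl(A(r,1)\bigr)$ recalled at the end of Section~\ref{sec:1}, without any appeal to the structure theory of Fuchsian groups.
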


An \emph{attracting fixed point}~$z_0$ of~$f$ is a fixed point (that is, $f(z_0)=z_0$) where the derivative of $f$ (which is well-defined because $z_0$ is a fixed point) has modulus less than~1; in particular, it follows that the sequence of iterates $\{f^k\}$ converges, uniformly on compact subsets, to the constant function~$z_0$. A \emph{periodic automorphism} is an automorphism~$f\in\Aut(X)$ such that $f^q=\id_X$ for some $q\ge 1$ --- and then the dynamics of $f$ is trivial. A \emph{pseudoperiodic automorphism} is an automorphism $f\in\Aut(X)$ such that there exists a subsequence of iterates converging to the identity~$\id_X$. Elliptic automorphisms of~$\D$ are always periodic or pseudoperiodic. A doubly connected hyperbolic Riemann surface is necessarily biholomorphic to a pointed disk or to an annulus; furthermore,
pseudoperiodic automorphisms are conjugated to rotations, either of the disk or of the pointed disk or of an annulus, and so their dynamics is well known.

Finally, a sequence $\{f_\nu\}\subset\Hol(X,Y)$ of holomorphic maps between two Riemann surfaces is \emph{compactly divergent} if for every compact $K\subseteq X$ and every compact $L\subseteq Y$ there is a $\nu_0\in\N$ such that $f_\nu(K)\cap L=\void$ for all $\nu\ge\nu_0$. 
Roughly speaking, a compactly diverging sequence is diverging to infinity (or, more precisely, is converging to the infinity point of the Alexandroff compactification of~$Y$). Restricting to the case of self-maps for simplicity, when $X=Y=D$ is a domain in a larger compact Riemann surface~$\widehat X$ then a compactly divergent sequence of iterates is converging toward the boundary, in the sense that all accumulation points of the sequence are constant maps contained in the boundary, and the set of accumulation points is closed and connected. Furthermore, when the boundary of~$D$ is sufficiently nice then Heins has obtained a complete analogue of the Wolff-Denjoy theorem:

\begin{Theorem}[Heins \cite{Heins1988}; 1988]
\label{th:0.Heins}
Let $D\subset\widehat X$ be a hyperbolic domain in a compact Riemann surface~$\widehat X$. Assume that $\de D$ consists of a finite number of isolated points or disjoint Jordan curves. Let $f\in\Hol(D,D)$ be such that the sequence of iterates~$\{f^k\}$ is compactly divergent in~$D$.
Then the sequence $\{f^k\}$ converges, uniformly on compact subsets, to a point $\tau\in\de D$.
\end{Theorem}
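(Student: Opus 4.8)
First I would set up the problem via normal families and reduce it to the convergence of a single orbit. Since $\{f^k\}$ is compactly divergent, Montel's theorem makes it a normal family, and every limit of a convergent subsequence is a constant map whose value lies in $\de D$; let $\Lambda\subseteq\de D$ be the (nonempty, closed) set of all such values, so the claim is that $\Lambda$ reduces to a point. It suffices to prove that for some $z_0\in D$ the orbit $\{f^k(z_0)\}$ converges in $\widehat X$ to a point $\tau\in\de D$: this forces $\Lambda=\{\tau\}$, and uniform convergence on a compact set $K$ follows because $\rho_D\bigl(f^k(z),f^k(z_0)\bigr)\le\rho_D(z,z_0)\le R_K$ for $z\in K$, so $f^k(K)$ is contained in the $\rho_D$-ball of radius $R_K$ about $f^k(z_0)$, and such balls shrink towards $\tau$ once one knows the boundary behaviour of $\rho_D$ discussed next.

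Second, I would analyse the hyperbolic metric $\rho_D$ near $\de D$, using crucially that $\de D$ is a \emph{finite} union of isolated points and Jordan curves. Near a Jordan curve component $E$ one has $\rho_D(z)\asymp\mathrm{dist}(z,E)^{-1}$: the upper bound comes from a round-annulus collar $A\subseteq D$ of $E$ (so $\rho_D\le\rho_A$), the lower bound from the fact that $E$ is a non-degenerate continuum (a standard lower bound for the hyperbolic metric by the distance to the boundary near a non-degenerate boundary continuum). Near an isolated boundary point $p$, the thin part of the cusp is a punctured-disc neighborhood of $p$. It follows that for any fixed radius $r$, a $\rho_D$-ball of radius $r$ whose centre is far out in $D$ lies inside a small neighborhood of a \emph{single} boundary component, with spherical diameter tending to $0$ as the centre approaches a Jordan curve component (resp.\ lying inside an arbitrarily small punctured disc around $p$ as the centre runs down a cusp at $p$). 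Combining this with $\rho_D\bigl(f^k(z_0),f^{k+1}(z_0)\bigr)\le\rho_D\bigl(z_0,f(z_0)\bigr)$ and with $f^k(z_0)\to\de D$ (compact divergence), one gets that consecutive orbit points are eventually near the same component, hence the orbit tail is confined near one component $E_*$ of $\de D$; if $E_*$ is an isolated point the orbit converges to it and we are done, so from now on $E_*$ is a Jordan curve.

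Third, in the Jordan-curve case the orbit eventually lies in a round-annulus collar $A=\{q<|\zeta|<1\}$ of $E_*$, with $\{|\zeta|=1\}$ identified with $E_*$ by Carath\'eodory's theorem, $|\zeta_k|\to1$, and $\zeta_k,\zeta_{k+1}$ spherically close; what must be excluded is that the orbit \emph{spirals} around $E_*$, which would make $\Lambda$ a non-degenerate subarc of (or all of) $E_*$. I would lift $f$ to $\tilde f\in\Hol(\D,\D)$ through the universal covering $\pi\colon\D\to D$ with covering group $\Gamma$. By Theorem~\ref{th:I.3.hypRiemDW}, compact divergence of $\{f^k\}$ puts $f$ in case~(iv); in particular $\tilde f$ is not an elliptic automorphism, so by Theorem~\ref{th:0.WD} the iterates $\tilde f^k$ converge locally uniformly to a point $\tilde\tau\in\de\D$ (it cannot lie in $\D$, or $f$ would have an attracting fixed point). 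Since $D$ is finitely connected with a Jordan curve boundary component, $\Gamma$ is a finitely generated Fuchsian group of the second kind: $\de\D=\Lambda_\Gamma\sqcup\Omega_\Gamma$ with $\Lambda_\Gamma$ nowhere dense, and $\pi$ extends continuously to $\D\cup\Omega_\Gamma$, mapping $\Omega_\Gamma$ onto the union of the Jordan curve components of $\de D$ (through the strip-over-annulus picture, compatibly with the Carath\'eodory identification on $E_*$). The point to be proved is that $\tilde\tau$ lies in the part of $\Omega_\Gamma$ over $E_*$; granting this, continuity of $\pi$ at $\tilde\tau$ gives $f^k(z_0)=\pi\bigl(\tilde f^k(\tilde z_0)\bigr)\to\pi(\tilde\tau)\in E_*$, a single point, which completes the proof.

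The \emph{main obstacle} is exactly this last claim — the exclusion of spiralling. If $\tilde\tau$ were a parabolic fixed point of $\Gamma$ the orbit would run down the corresponding cusp and hence converge to a puncture, contradicting its confinement near the Jordan curve $E_*$; so the parabolic case is harmless. The genuine difficulty is to exclude that $\tilde\tau$ is a conical limit point of $\Gamma$, or a hyperbolic fixed point on the boundary of a $\Gamma$-translate of the collar over $E_*$, since these are precisely the configurations producing a spiralling, non-convergent orbit. To rule them out I would exploit the intertwining relation $\tilde f\circ g=\sigma(g)\circ\tilde f$ for $g\in\Gamma$, where $\sigma=f_*\colon\Gamma\to\Gamma$: letting $k\to\infty$ forces $\sigma^k(g)\bigl(\tilde f^k(z)\bigr)\to\tilde\tau$, so the elements $\sigma^k(g)$ must asymptotically preserve $\tilde\tau$, and together with the second-kind structure of $\Gamma$ and the fact that the orbit tail stays in one component of $\pi^{-1}(A)$ this should pin $\tilde\tau$ down to $\Omega_\Gamma$ over $E_*$. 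Making this rigorous — including the bookkeeping that the orbit does not hop between components of $\pi^{-1}(A)$ — is the technical heart of the argument; the rest (the normal-family reduction, the metric estimates) is soft or classical.
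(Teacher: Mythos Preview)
The paper does not contain a proof of this statement. Theorem~\ref{th:0.Heins} is quoted in the introduction as a known background result due to Heins~\cite{Heins1988}, and the paper simply adds ``See also~\cite{Abate} for proofs and other related results.'' There is therefore nothing in the paper to compare your argument against.

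As for the proposal itself: the reductions in your first two steps (normal-family argument, confinement of the orbit near a single boundary component via the Schwarz--Pick bound $\omega_D\bigl(f^k(z_0),f^{k+1}(z_0)\bigr)\le\omega_D\bigl(z_0,f(z_0)\bigr)$ together with the boundary behaviour of the Poincar\'e metric) are sound and are indeed the standard opening moves. The genuine content is, as you yourself identify, the exclusion of spiralling around a Jordan-curve component, and here your sketch stops short of a proof. The intertwining relation $\tilde f\circ g=\sigma(g)\circ\tilde f$ does hold, but the inference ``$\sigma^k(g)(\tilde f^k(z))\to\tilde\tau$ forces the $\sigma^k(g)$ to asymptotically preserve $\tilde\tau$'' is not an argument: many sequences of M\"obius maps can send a convergent sequence to the same limit without fixing it, and you have not used anything that distinguishes conical limit points from ordinary points of $\Omega_\Gamma$. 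You also need to justify that the lifted orbit $\{\tilde f^k(\tilde z_0)\}$ stays in a single component of $\pi^{-1}(A)$; a priori it could hop between $\Gamma$-translates even while $f^k(z_0)$ remains in $A$, and controlling this is exactly the bookkeeping you flag but do not carry out. So the proposal correctly isolates where the difficulty lies but does not resolve it; since the paper offers no proof of its own, I cannot tell you whether your intended route matches Heins' original argument or the one in~\cite{Abate}.
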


See also~\cite{Abate} for proofs and other related results.

%
%
%

A sequence of iterates is obtained by composing the same map over and over. Stimulated from problems coming from continued fractions theory, computer simulations of dynamical systems and other sources, in the '80s mathematicians have started to study properties of sequences obtained by composing different self-maps. This area of exploration is often called ``random dynamics" because the self-maps to consider can be chosen at random according to a suitable probability distribution, for instance with mass concentrated around a fixed self-map, thus obtaining random perturbations of the sequence of iterates. We shall not pursue a probabilistic approach in this paper, preferring a more topological approach, but we kept the term ``random dynamics" in the title as done, for instance, in \cite{BeardonCarneMindaNg2004}.

Let us define the sequences of self-maps appearing in random dynamics.

\begin{Definition}
\label{def:I.3.ifs}
Let $\{f_\nu\}$ be a sequence of self-maps of a Riemann surface~$X$, all different from the identity.
The \emph{left} (or \emph{direct} or \emph{forward}) 
\emph{iterated function system} (or \emph{composition system}) generated by $\{f_\nu\}$ is the sequence of self-maps $\{L_\nu\}$ given by
\[
L_\nu=f_\nu\circ f_{\nu-1}\circ\cdots\circ f_0\;.
\] 
The \emph{right} (or \emph{reverse} or \emph{backward}) \emph{iterated function system} generated by $\{f_\nu\}$ is instead the sequence of self-maps $\{R_\nu\}$ given by
\[
R_\nu=f_0\circ f_1\circ\cdots\circ f_\nu\;.
\] 
When all the maps $f_\nu$ belong to a given family $\cal F$ of self-maps of $X$ we shall say that the corresponding left or right iterated function system is \emph{in}~$\cal F$.
\end{Definition} 

Clearly, general left or right iterated function systems can have very erratic behaviours. Moreover, left and right iterated function systems generated by the same sequence can behave very differently; consider for instance the case when all $f_\nu$'s are constant. So to get meaningful theorems one has somehow to restrict the class $\mathcal{F}$ of functions used to generate the iterated function systems.

In this paper we shall consider iterated function systems generated by two classes of self-maps: 
maps belonging to $\Hol(X,\Omega)$, where $\Omega\subset X$ is a Bloch subdomain of~$X$; and maps sufficiently close to a given self-map $F\in\Hol(X,X)$.

Roughly speaking (see Section~\ref{sec:1} for a precise definition) a Bloch subdomain~$\Omega$ of a hyperbolic Riemann surface is a domain $\Omega\subset X$ such that all holomorphic maps from~$X$ into~$\Omega$ are strict contraction with respect to the Poincar\'e distance of~$X$. Right iterated function systems in~$\Hol(X,\Omega)$ have been thoroughly studied by Beardon, Carne, Minda and Ng~\cite{BeardonCarneMindaNg2004} in~2004. Left iterated function systems, on the other hand, are not so well studied, with the exception of a few results due to Gill  in the unit disk (see \cites{Gill1991a, Gill2012, Gill2017}) and to Keen and Lakic on plane domains (see \cite[Section 11.2]{KeenLakic2007}). 

In this paper, along the lines followed in~\cite{BeardonCarneMindaNg2004} for right iterated function systems, we shall complete the study of left iterated function systems in Bloch domains proving the following result:

\begin{Theorem}
\label{th:1.uno}
Let $\Omega\subset X$ be a Bloch domain in a hyperbolic Riemann surface~$X$, and let $\{f_\nu\}$ be a sequence of holomorphic self-maps of~$X$ 
such that  $f_\nu(X)\subseteq\Omega$ for all~$\nu\in\N$, that is $\{f_\nu\}\subset\Hol(X,\Omega)$. Then:
\begin{enumerate}
\item[\rm (i)] all limit points of the left iterated function systems $\{L_\nu\}$ generated by~$\{f_\nu\}$ are constant;
\item[\rm (ii)] let $z_\nu\in\Omega$ be the unique fixed point of~$f_\nu$. Then $\{L_\nu\}$ converges to a constant $z_\infty\in X$ if and only if $z_\nu\to z_\infty$.
 \end{enumerate}
\end{Theorem}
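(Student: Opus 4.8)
The plan is to exploit the quantitative form of the Bloch condition (see Section~\ref{sec:1}): there is a constant $c=c(\Omega)\in[0,1)$ such that every $g\in\Hol(X,\Omega)$, viewed as a self-map of $X$, is a $c$-contraction for the Poincar\'e distance $\omega_X$, that is $\omega_X(g(z),g(w))\le c\,\omega_X(z,w)$ for all $z,w\in X$; this combines the Schwarz--Pick contraction of $g\colon X\to\Omega$ with the strict contraction of the inclusion $\Omega\hookrightarrow X$ that defines the Bloch property. Since $(X,\omega_X)$ is complete, the Banach fixed point theorem gives for each $f_\nu$ a unique fixed point $z_\nu$, which necessarily lies in $f_\nu(X)\subseteq\Omega$. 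Fix once and for all a basepoint $p\in X$.

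For part~(i), the map $L_\nu=f_\nu\circ f_{\nu-1}\circ\cdots\circ f_0$ is a composition of $\nu+1$ maps, each a $c$-contraction of $(X,\omega_X)$, so $\omega_X(L_\nu(z),L_\nu(w))\le c^{\nu+1}\,\omega_X(z,w)$ for all $z,w\in X$. If $g\in\Hol(X,X)$ is a limit point of $\{L_\nu\}$ --- the only alternative, by the normal-families dichotomy for holomorphic maps into a hyperbolic Riemann surface, being that $\{L_\nu\}$ is compactly divergent and has no limit points at all --- then letting $\nu\to\infty$ along a suitable subsequence gives $\omega_X(g(z),g(w))=0$ for all $z,w$, so $g$ is constant.

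For part~(ii), assume first that $z_\nu\to z_\infty\in X$. Fix a compact $K\subseteq X$ and set $b_\nu=\sup_{z\in K}\omega_X(L_\nu(z),z_\nu)$. Using $L_\nu=f_\nu\circ L_{\nu-1}$ and $z_\nu=f_\nu(z_\nu)$, the $c$-contractivity of $f_\nu$ together with the triangle inequality through $z_{\nu-1}$ gives the recursion $b_\nu\le c\,b_{\nu-1}+c\,\delta_\nu$ with $\delta_\nu:=\omega_X(z_{\nu-1},z_\nu)\to 0$; since $b_0<\infty$ (because $f_0(K)$ is compact), an elementary $\limsup$ argument forces $b_\nu\to 0$, and together with $\omega_X(z_\nu,z_\infty)\to 0$ this yields $\sup_{z\in K}\omega_X(L_\nu(z),z_\infty)\to 0$, i.e.\ $L_\nu\to z_\infty$ locally uniformly. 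Conversely, assume $L_\nu\to z_\infty\in X$ locally uniformly and put $s_\nu:=\omega_X(L_\nu(p),z_\infty)\to 0$. From $\omega_X(z_\nu,z_\infty)\le\omega_X(z_\nu,L_\nu(p))+s_\nu$ and $\omega_X(z_\nu,L_\nu(p))=\omega_X\bigl(f_\nu(z_\nu),f_\nu(L_{\nu-1}(p))\bigr)\le c\,\omega_X(z_\nu,L_{\nu-1}(p))\le c\bigl(\omega_X(z_\nu,z_\infty)+s_{\nu-1}\bigr)$ we obtain $(1-c)\,\omega_X(z_\nu,z_\infty)\le c\,s_{\nu-1}+s_\nu$, so $\omega_X(z_\nu,z_\infty)\to 0$.

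The only ingredient that is not a routine Banach-fixed-point manipulation is the uniform contraction constant $c<1$ valid simultaneously for every map in $\Hol(X,\Omega)$; this is exactly where the Bloch hypothesis is used, and it is the heart of the matter. I would also stress that the argument is specific to \emph{left} iterated function systems: the images $L_\nu(X)$ shrink because the outermost maps keep contracting, a phenomenon with no counterpart for right iterated function systems, which is why the two settings (compare~\cite{BeardonCarneMindaNg2004}) genuinely require different treatments.
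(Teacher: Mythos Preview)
Your proof is correct and follows essentially the same approach as the paper: both reduce the theorem to the uniform contraction constant supplied by Proposition~\ref{th:I.3.Bloch} and then carry out the metric-space estimates recorded in Theorem~\ref{th:I.3.toprand}. Your direct inequality $(1-c)\,\omega_X(z_\nu,z_\infty)\le c\,s_{\nu-1}+s_\nu$ for the converse in~(ii) is in fact slightly cleaner than the paper's argument by contradiction, and your recursion--$\limsup$ treatment of the forward direction is just a repackaging of the paper's explicitly unrolled sum.
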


Random iteration for holomorphic self-maps ot the unit disk $\D$ sufficiently close to a given map have been thoroughly studied by Short and the second author in 2019 (see~\cite{ChristodoulouShort2019}). In this paper we shall extend their results to the setting of generic hyperbolic Riemann surfaces in the spirit of Heins theorems. 
According to Theorem~\ref{th:0.Heins} we need to distinguish three cases: when $F\in\Hol(X,X)$ has an attracting fixed point; when the sequence $\{F^\nu\}$ of iterates of~$F$ is compactly divergent; and when $F$ is a periodic or pseudoperiodic automorphism of~$X$.

 When $F$ has an attracting fixed point we shall prove a fairly complete result:

\begin{Theorem}
\label{th:1.due}
Let $X$ be a hyperbolic Riemann surface and
let $F\in\Hol(X,X)$ be with an attracting fixed point $z_0\in X$. Then:
\begin{enumerate}
\item[\rm(i)] there exists a neighbourhood $\mathcal{U}$ of~$F$ in~$\Hol(X,X)$ such that every right iterated function system generated by $\{f_\nu\}\subset\mathcal{U}$ converges to a constant in~$X$;
\item[\rm(ii)]if $\{f_\nu\}\subset\Hol(X,X)$ is a sequence converging to~$F$ then the left iterated function system generated by $\{f_\nu\}$ converges to~$z_0$.
\end{enumerate}
\end{Theorem}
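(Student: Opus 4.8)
The plan is to exploit the fact that an attracting fixed point forces a genuine (not just non-strict) contraction of the Poincaré distance on a suitable neighbourhood, and then to run two essentially metric arguments. First I would fix a small enough coordinate disk $B$ around $z_0$ on which $F$ looks like $w\mapsto \lambda w + O(w^2)$ with $|\lambda|<1$; by shrinking $B$ one gets a constant $c\in(0,1)$ and a relatively compact Poincaré ball $B'=B_X(z_0,r)$ with $F(\overline{B'})\subset B'$ and $\omega_X(F(z),F(w))\le c\,\omega_X(z,w)$ for $z,w\in B'$ (here $\omega_X$ is the Poincaré distance of $X$). This uses only the Schwarz–Pick contraction together with the Taylor expansion at the fixed point, so it is routine.

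For part (i), I would take $\mathcal U$ to be the set of $g\in\Hol(X,X)$ with $g(\overline{B'})\subset B'$ and $\operatorname{Lip}_{B'}(g)\le c'$ for some fixed $c'\in(c,1)$; this is a neighbourhood of $F$ in the compact-open topology because the Lipschitz constant on the relatively compact set $\overline{B'}$ varies continuously (and $F$ lies in its interior by the estimate above, with room to spare). For any right iterated function system $R_\nu=f_0\circ\cdots\circ f_\nu$ with all $f_\nu\in\mathcal U$, each $f_\nu$ maps $\overline{B'}$ into $B'$ and contracts $\omega_X$ on $B'$ by the factor $c'$. Hence for $\mu>\nu$ the map $f_{\nu+1}\circ\cdots\circ f_\mu$ sends $\overline{B'}$ into a set of $\omega_X$-diameter at most $(c')^{\mu-\nu}\operatorname{diam}_{\omega_X}(B')$, and applying $f_0\circ\cdots\circ f_\nu$ (which is $1$-Lipschitz) shows $\operatorname{diam}_{\omega_X}\big(R_\mu(\overline{B'})\big)$ and in fact $\sup_{z\in\overline{B'}}\omega_X(R_\mu(z),R_\nu(z))$ is bounded by a geometric tail; since every $R_\nu$ has image in $\overline{B'}$, the sequence $\{R_\nu\}$ is uniformly Cauchy for $\omega_X$ on all of $X$ and converges uniformly on compacta to a constant in $\overline{B'}\subset X$.

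For part (ii), let $\{f_\nu\}\subset\Hol(X,X)$ converge to $F$ uniformly on compacta. I would first observe that convergence $f_\nu\to F$ forces, for every $\varepsilon>0$, the inclusion $f_\nu(\overline{B'})\subset B'$ and the contraction estimate $\operatorname{Lip}_{B'}(f_\nu)\le c'$ for all $\nu$ large, say $\nu\ge N$; moreover $f_\nu(z_0)\to F(z_0)=z_0$. Write $L_\nu=f_\nu\circ\cdots\circ f_0$. The tail $g_\nu:=f_\nu\circ\cdots\circ f_N$ satisfies $g_\nu(\overline{B'})\subset\overline{B'}$ and, using $\omega_X(g_\nu(z),z_0)\le\omega_X(g_\nu(z),f_\nu(z_0))+\omega_X(f_\nu(z_0),z_0)\le c'\,\omega_X(g_{\nu-1}(z),z_0)+\omega_X(f_\nu(z_0),z_0)$, an induction gives $\omega_X(g_\nu(z),z_0)\le (c')^{\nu-N}\operatorname{diam}_{\omega_X}(B')+\sum_{j=N}^{\nu}(c')^{\nu-j}\omega_X(f_j(z_0),z_0)$. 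Since $\omega_X(f_j(z_0),z_0)\to0$, the first term vanishes and the second, being a weighted average of a null sequence with geometric weights summing to $\le 1/(1-c')$, also tends to $0$; hence $g_\nu\to z_0$ uniformly on $\overline{B'}$. Finally $L_\nu=g_\nu\circ(f_{N-1}\circ\cdots\circ f_0)$, and $f_{N-1}\circ\cdots\circ f_0$ is a fixed holomorphic map sending every compact $K\subset X$ into some compact; but I need its image to land in $\overline{B'}$ to conclude, which is not automatic.

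The main obstacle is exactly this last point: unlike the right system, the outer composition in $L_\nu$ is applied to the \emph{fixed} initial block $f_{N-1}\circ\cdots\circ f_0$, whose image need not lie in $B'$. To handle it I would not fix $N$ but instead show that the "absorption time" is finite: because $F^k\to z_0$ uniformly on compacta, for any compact $K\subset X$ there is $m=m(K)$ with $F^m(K)\subset B'$, and then for $\nu$ large the corresponding block $f_{\nu}\circ\cdots\circ f_{\nu-m+1}$ (being uniformly close to $F^m$ on the compact set containing $f_{\nu-m}\circ\cdots\circ f_0(K)$, which can be controlled since finitely many $f_j$ are involved and the earlier $f_j$ lie in a fixed bounded region once $\nu$ is large — here one uses that $f_j\to F$ and $F$ itself is locally bounded) already maps $K$ into $\overline{B'}$. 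Once a compact $K$ has been pushed into $\overline{B'}$ at some stage $\nu_0$, the tail estimate above (with the geometric contraction and the $f_j(z_0)\to z_0$ input) drives $L_\nu|_K$ to $z_0$. So the real work is the bookkeeping showing that the "transient" part $f_{\nu_0}\circ\cdots\circ f_0(K)$ stays in a \emph{fixed} compact set independent of $\nu$; this follows because $f_0,\dots,f_{\nu_0}$ are among a convergent (hence precompact, hence equi-locally-bounded) family of self-maps, so their finite compositions move compacta into compacta in a controlled way. I expect this precompactness-and-absorption argument, rather than any single estimate, to be the delicate step; everything else is the geometric-series contraction computation sketched above.
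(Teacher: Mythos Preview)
Your local contraction estimates on the ball $\overline{B'}=\overline{B_X(z_0,r)}$ are fine and parallel the paper's, but both parts of your argument share the same genuine gap: passing from convergence on~$\overline{B'}$ to convergence on all of~$X$.

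In part~(i) you assert that ``every $R_\nu$ has image in $\overline{B'}$'' and conclude the Cauchy estimate holds on all of~$X$. This is false: your neighbourhood $\mathcal{U}$ only forces $g(\overline{B'})\subset B'$, not $g(X)\subset\overline{B'}$, so for $z\notin\overline{B'}$ there is no reason why $f_\nu(z)\in\overline{B'}$ or why any $R_\nu(z)$ lies in~$\overline{B'}$. Your Cauchy argument is valid only for $z\in\overline{B'}$. In part~(ii) you correctly identify the analogous obstacle for the left system and propose an absorption argument, but as written it is circular: to know that the moving block $f_\nu\circ\cdots\circ f_{\nu-m+1}$ is close to $F^m$ where it matters, you need $f_{\nu-m}\circ\cdots\circ f_0(K)$ to sit in a fixed compact set, yet the number of maps in this composition grows with~$\nu$; precompactness of the family $\{f_j\}$ controls single images $f_j(K)$, not arbitrarily long compositions. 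Your justification (``finitely many $f_j$ are involved'') applies to the wrong block.

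The paper resolves both issues in one stroke with Vitali's theorem: once $\{R_\nu\}$ (respectively the relabelled $\{L_\nu\}$) converges on~$\overline{B'}$, which is a set with accumulation points, holomorphy forces convergence on all of~$X$. This makes the ``relabelling'' in~(ii) a genuine WLOG (if the tail system converges to $z_0$ on~$X$, precomposing with the fixed initial block is trivial) and eliminates the absorption bookkeeping entirely. A secondary simplification you miss in~(i): rather than including an explicit Lipschitz bound in~$\mathcal{U}$ and arguing that this condition is open, the paper takes $\mathcal{U}=\{h:h(\overline{D})\subset B_X(z_0,t)\}$ for some $t\in(kr,r)$; then $B_X(z_0,t)$ is a Bloch subdomain of~$D$, so every $h\in\mathcal{U}$ is automatically a uniform contraction of~$\omega_D$, and the right-IFS result for Bloch domains applies directly.
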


When the sequence of iterates of $F$ is compactly divergent, simple examples (see, e.g., Example~\ref{ex:I.3.convWDboundb}) show that in general we cannot deduce much on the behavior of right iterated function systems generated by functions close to~$F$. On the other hand, if $f_\nu$ converges to~$F$ fast enough we shall prove that the behavior of $\{L_\nu\}$ is dictated by the dynamical behavior of~$F$:

\begin{Theorem}
\label{th:1.tre}
Let $X$ be a hyperbolic Riemann surface, and let $F\in\Hol(X,X)$ be such that the sequence of iterates $\{F^\nu\}$ is compactly divergent. Then we can find a sequence 
of neighbourhoods $\mathcal{U}_\nu\subset\Hol(X,X)$ of~$F$ such that if $f_\nu\in\mathcal{U}_\nu$ for all~$\nu\in\N$ then the left iterated function system $\{L_\nu\}$ generated by~$\{f_\nu\}$ is compactly divergent. Furthermore, if $X\subset\widehat{X}$ is a hyperbolic domain in a compact Riemann surface and $\{F^\nu\}$ converges to a point $\tau\in\de X$ then 
$\{L_\nu\}$ converges to~$\tau$.
\end{Theorem}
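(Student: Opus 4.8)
The strategy is to exploit the contraction properties of the Poincaré distance $\omega_X$ together with the compact divergence of $\{F^\nu\}$ to choose the neighbourhoods $\mathcal{U}_\nu$ shrinking fast enough that the perturbation errors are summable along the orbit. First I would fix a base point $p\in X$ and an exhaustion of $X$ by relatively compact, connected, $\omega_X$-balls (or just sets) $K_0\subset K_1\subset\cdots$ with $\bigcup_\nu K_\nu=X$. Since $\{F^\nu\}$ is compactly divergent, for each $m$ there is $\nu(m)$ with $F^\nu(K_m)\cap K_m=\emptyset$ for $\nu\ge\nu(m)$; more usefully, for the fixed base point, $\omega_X\bigl(p,F^\nu(p)\bigr)\to\infty$. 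The key quantitative input is that every $f\in\Hol(X,X)$ is $1$-Lipschitz for $\omega_X$, and I will compare the orbit of $\{L_\nu\}$ with the orbit of $\{F^\nu\}$ by a telescoping estimate: writing $L_\nu=f_\nu\circ L_{\nu-1}$ and $F^{\nu+1}=F\circ F^\nu$, one gets
\[
\omega_X\bigl(L_\nu(p),F^{\nu+1}(p)\bigr)\le \omega_X\bigl(f_\nu(L_{\nu-1}(p)),F(L_{\nu-1}(p))\bigr)+\omega_X\bigl(F(L_{\nu-1}(p)),F(F^{\nu}(p))\bigr),
\]
and the second term is $\le\omega_X\bigl(L_{\nu-1}(p),F^\nu(p)\bigr)$ by the Schwarz--Pick lemma. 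Hence the total deviation is bounded by $\sum_{k\le\nu}\omega_X\bigl(f_k(L_{k-1}(p)),F(L_{k-1}(p))\bigr)$.

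The plan is then to choose $\mathcal{U}_\nu$ so that $f_\nu$ is uniformly close to $F$ on the (not yet known) point $L_{\nu-1}(p)$. Since we do not know $L_{\nu-1}(p)$ in advance, I would instead demand that $f_\nu$ be close to $F$ on all of $K_\nu$: concretely, set
\[
\mathcal{U}_\nu=\Bigl\{f\in\Hol(X,X):\ \sup_{z\in K_\nu}\omega_X\bigl(f(z),F(z)\bigr)<\eps_\nu\Bigr\},
\]
with $\eps_\nu>0$ chosen so that $\sum_\nu\eps_\nu<\infty$. The subtlety is that the telescoping estimate only controls $f_\nu$ at the point $L_{\nu-1}(p)$, which need not lie in $K_\nu$. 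To fix this I would run an induction: using that $F$ is $1$-Lipschitz and $\{f_k\}\subset\mathcal{U}_k$, one shows $\omega_X\bigl(L_{\nu-1}(p),F^{\nu}(p)\bigr)\le\sum_{k<\nu}\eps_k=:S<\infty$ provided $L_{k-1}(p)\in K_k$ at each stage; and since $F^{\nu}(p)$ is $\omega_X$-diverging, the ball of radius $S$ around $F^\nu(p)$ eventually leaves any fixed compact set, so after enlarging the $K_\nu$ (equivalently accelerating the exhaustion relative to the speed of divergence of $F^\nu(p)$) we can guarantee $L_{\nu-1}(p)\in K_\nu$ for all $\nu$, closing the induction. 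The bounded-deviation bound $\omega_X\bigl(L_\nu(p),F^{\nu+1}(p)\bigr)\le S$ for all $\nu$, combined with compact divergence of $\{F^\nu\}$, immediately gives compact divergence of $\{L_\nu\}$: for any compact $L\subset X$, $\omega_X(p,F^{\nu+1}(p))\to\infty$ forces $L_\nu(p)\notin L$ eventually, and by the Lipschitz property the same holds uniformly for $z$ in any fixed compact $K$ (absorb the bounded term $\omega_X$-diameter of $K$).

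For the last sentence, assume $X\subset\widehat X$ with $\{F^\nu\}\to\tau\in\de X$ uniformly on compacta. Then $F^\nu(p)\to\tau$ in $\widehat X$, and since $\omega_X\bigl(L_\nu(p),F^{\nu+1}(p)\bigr)\le S$ stays bounded, I would invoke the standard fact (used already in the discussion preceding Theorem~\ref{th:0.Heins}, and available for hyperbolic domains in compact surfaces) that $\omega_X$-bounded sets shrink to points near the boundary: more precisely, if $a_\nu,b_\nu\in X$ with $a_\nu\to\tau\in\de X$ and $\omega_X(a_\nu,b_\nu)\le S$, then $b_\nu\to\tau$ as well, because the Poincaré metric blows up near $\de X$. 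Hence $L_\nu(p)\to\tau$, and again the Lipschitz property upgrades this to uniform convergence of $L_\nu$ to the constant $\tau$ on compact subsets of $X$.

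The main obstacle I anticipate is the bookkeeping in the induction: one must interleave two competing rates — the speed at which $F^\nu(p)$ escapes to infinity (which we do not control, only that it happens) and the rate $\eps_\nu$ at which the neighbourhoods shrink — in such a way that the orbit $L_{\nu-1}(p)$ provably stays inside $K_\nu$ so that membership $f_\nu\in\mathcal{U}_\nu$ actually delivers the estimate we need. The clean way to organize this is to \emph{first} fix any summable $\eps_\nu$, set $S=\sum\eps_\nu$, and \emph{then} define $K_\nu$ to be the closed $\omega_X$-ball of radius $S+1$ centered at $F^\nu(p)$ (these exhaust $X$ in the sense needed, since $F^\nu(p)$ diverges), so that the inclusion $L_{\nu-1}(p)\in K_\nu$ becomes automatic from the inductive bound rather than something to be arranged. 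The remaining verification — that these $\mathcal{U}_\nu$ are genuine neighbourhoods of $F$ in the compact-open topology and that the final compact-divergence and boundary-convergence conclusions follow — is then routine.
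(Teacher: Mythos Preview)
Your proposal is correct and follows essentially the same line as the paper's proof: the telescoping estimate via Schwarz--Pick, the inductive bound $\omega_X\bigl(L_\nu(p),F^{\nu+1}(p)\bigr)\le S$, the choice of compact sets large enough to guarantee $L_{\nu-1}(p)\in K_\nu$ so the induction closes, and the appeal to Proposition~\ref{th:bP} for the boundary-convergence clause are all exactly what the paper does. The only cosmetic difference is that the paper centers its compact sets at the fixed base point with radii $1+\omega_X\bigl(F^\nu(z_0),z_0\bigr)$, whereas you center at $F^\nu(p)$ with fixed radius $S+1$; by the triangle inequality these choices are equivalent for the purpose of the induction, and the paper also spells out explicitly (via a finite cover of $F(D_\nu)$ by small balls) why the resulting $\mathcal{U}_\nu$ is open in the compact-open topology, which you defer as routine.
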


Surprisingly enough, if the convergence of $f_\nu$ to~$F$ is too slow then the left iterated function system might be oscillating even when the sequence of iterates of $F$ converges to a point in the boundary; see Example~\ref{ex:I.3.convWDbound}.

Finally, if $F$ is a periodic or pseudoperiodic automorphism of~$X$, in Section~\ref{sec:3} we shall see that the only interesting case is $X=\D$, that has already been
studied in~\cite{ChristodoulouShort2019}.

\section{Preliminaries}
\label{sec:1}

In this section we collect known definitions and results that we shall use in the sequel.

Let $X$ be a hyperbolic Riemann surface. We shall denote by $\omega_X$ the Poincar\'e distance on~$X$, which is a complete distance whose main property is the classical Schwarz-Pick lemma:

\begin{Theorem}
\label{th:SPhyp}
Let $X$ and $Y$ be two  hyperbolic Riemann surfaces,
and $f\colon X\to Y$ a holomorphic function. Then
\[
\forevery{z_1,\,z_2\in X}{\omega_Y\bigl(f(z_1),f(z_2)\bigr)\le\omega_X
	(z_1,z_2)\;.}
\]
Furthermore, equality at some $z_1\ne z_2$ implies that $f$ is a covering map; conversely, if $f$ is a covering map then for every $w_1$, $w_2\in Y$ and $z_1\in X$ with $f(z_1)=w_1$ we can find $z_2\in X$ so that $f(z_2)=w_2$ and $\omega_Y(w_1,w_2)=\omega_X(z_1,z_2)$.
\end{Theorem}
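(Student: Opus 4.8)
The statement to prove is the Schwarz-Pick lemma for hyperbolic Riemann surfaces, Theorem~\ref{th:SPhyp}.

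\bigskip

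The plan is to reduce everything to the classical Schwarz-Pick lemma on the unit disk~$\D$ by passing to universal covers. Recall that a Riemann surface~$X$ is hyperbolic precisely when its universal cover is biholomorphic to~$\D$; fix universal covering maps $\pi_X\colon\D\to X$ and $\pi_Y\colon\D\to Y$. The Poincar\'e distance $\omega_X$ is by definition the quotient distance induced by $\pi_X$ from the Poincar\'e distance $\omega_\D$ on~$\D$, meaning
\[
\omega_X(z_1,z_2)=\min\bigl\{\omega_\D(\zeta_1,\zeta_2) : \pi_X(\zeta_j)=z_j,\ j=1,2\bigr\}\;,
\]
and analogously for~$Y$; equivalently $\pi_X$ and $\pi_Y$ are local isometries that contract distances, and the minimum above is attained.

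\bigskip

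First I would lift~$f$. Since $\D$ is simply connected and $\pi_Y$ is a covering, the composition $f\circ\pi_X\colon\D\to Y$ lifts to a holomorphic map $\tilde f\colon\D\to\D$ with $\pi_Y\circ\tilde f=f\circ\pi_X$. Now apply the classical disk version of the Schwarz-Pick lemma to~$\tilde f$, which gives $\omega_\D(\tilde f(\zeta_1),\tilde f(\zeta_2))\le\omega_\D(\zeta_1,\zeta_2)$ for all $\zeta_1,\zeta_2\in\D$, with equality (at some pair with $\zeta_1\ne\zeta_2$) if and only if $\tilde f$ is an automorphism of~$\D$. To prove the stated inequality on~$X$, fix $z_1,z_2\in X$ and choose lifts $\zeta_1,\zeta_2\in\D$ realizing $\omega_X(z_1,z_2)=\omega_\D(\zeta_1,\zeta_2)$. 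Then, using that $\pi_Y$ contracts and $\tilde f$ contracts,
\[
\omega_Y\bigl(f(z_1),f(z_2)\bigr)\le\omega_\D\bigl(\tilde f(\zeta_1),\tilde f(\zeta_2)\bigr)\le\omega_\D(\zeta_1,\zeta_2)=\omega_X(z_1,z_2)\;,
\]
which is the desired contraction property.

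\bigskip

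For the equality case, suppose $\omega_Y(f(z_1),f(z_2))=\omega_X(z_1,z_2)$ with $z_1\ne z_2$. Then both inequalities above are equalities; in particular $\tilde f$ attains equality in the disk Schwarz-Pick lemma, so $\tilde f\in\Aut(\D)$. The main obstacle, and the part requiring the most care, is deducing that $f$ is then a covering map: I would argue that $\pi_Y\circ\tilde f\colon\D\to Y$ is a covering map (being the composition of an automorphism of~$\D$ with the covering $\pi_Y$), and that this factors as $f\circ\pi_X$; since $\pi_X$ is a covering, a covering-space argument shows $f$ must itself be a covering. For the converse, assume $f$ is a covering map. Then $\tilde f$ is a lift of the covering $f\circ\pi_X$ through the covering $\pi_Y$, hence is itself a covering $\D\to\D$, and therefore an automorphism of~$\D$, so $\tilde f$ is a global isometry for $\omega_\D$. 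Given $w_1,w_2\in Y$ and $z_1\in X$ with $f(z_1)=w_1$, I would pick $\zeta_1\in\D$ with $\pi_X(\zeta_1)=z_1$, set $\eta_1=\tilde f(\zeta_1)$ so that $\pi_Y(\eta_1)=w_1$, choose $\eta_2\in\D$ with $\pi_Y(\eta_2)=w_2$ and $\omega_\D(\eta_1,\eta_2)=\omega_Y(w_1,w_2)$, and then define $\zeta_2=\tilde f^{-1}(\eta_2)$ and $z_2=\pi_X(\zeta_2)$. Tracking the isometry property of $\tilde f$ together with the distance-realizing choices then yields $f(z_2)=w_2$ and $\omega_Y(w_1,w_2)=\omega_X(z_1,z_2)$, completing the proof.
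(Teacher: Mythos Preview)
The paper does not give its own proof of this theorem; it simply refers the reader to \cite{BeardonMinda2007}. Your argument via universal covers is exactly the standard one found in such references and is correct.

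Two places where you wave your hands are worth one more line each. First, the ``covering-space argument'' showing that $f$ is a covering when $\tilde f\in\Aut(\D)$: since $f\circ\pi_X=\pi_Y\circ\tilde f$ is then a universal covering of~$Y$, its deck group contains the deck group~$\Gamma_X$ of~$\pi_X$, and the induced map $\D/\Gamma_X\to\D/\Gamma_Y$ between the quotients is precisely~$f$, hence a covering. Second, for the converse you assert that a lift of a covering through a covering is itself a covering; this is true and is in fact the same lemma from \cite{Lee2000} that the paper invokes later in Corollary~\ref{th:I.3.selfcov}. Finally, in your last paragraph you should make explicit that the chain $\omega_X(z_1,z_2)\le\omega_\D(\zeta_1,\zeta_2)=\omega_\D(\eta_1,\eta_2)=\omega_Y(w_1,w_2)$ combines with the already-proved contraction inequality $\omega_Y(w_1,w_2)=\omega_Y\bigl(f(z_1),f(z_2)\bigr)\le\omega_X(z_1,z_2)$ to force equality.
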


For a proof see, e.g., \cite{BeardonMinda2007}.

For each $z\in X$ and $R>0$ we shall denote by $B_X(z,R)$ the ball with respect to~$\omega_X$ centred in~$z$ and with radius~$R$. The completeness of~$\omega_X$ 
implies that the closed balls $\overline{B_X(z,R)}$ are compact in~$X$.

\begin{Definition}
\label{def:I.8.Bloch}
If $\Omega\subset X$ is a domain in a hyperbolic Riemann surface~$X$ and $z\in\Omega$ put
\[
R(z;\Omega,X)=\sup\{r>0\mid B_X(z,r)\subseteq\Omega\}
\]
and
\[
R(\Omega,X)=\sup_{z\in\Omega}R(z;\Omega,X)=\sup\{r>0\mid\exists z\in\Omega: B_X(z,r)\subseteq\Omega\}\;.
\]
We say that $\Omega$ is a \emph{Bloch domain} of~$X$ if $R(\Omega,X)<+\infty$. 
\end{Definition}

It is easy to see that relatively compact subdomains of a hyperbolic Riemann surface are Bloch domains; but it is not too difficult to find examples of Bloch domains which are not relatively compact (see, e.g., \cite{BeardonCarneMindaNg2004}).

The main property of Bloch domains is the following: 

\begin{Proposition}[Beardon, Carne, Minda, Ng \cite{BeardonCarneMindaNg2004}]
\label{th:I.3.Bloch}
Let $\Omega\subset X$ be a Bloch domain in a hyperbolic Riemann surface~$X$. Then there exists $0<\ell<1$ such that
\[
\omega_X\bigl(f(z),f(w)\bigr)\le\ell\, \omega_X(z,w)
\]
for all $z$,~$w\in X$ and all $f\in\Hol(X,\Omega)\subset\Hol(X,X)$. 
\end{Proposition}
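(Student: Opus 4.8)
The plan is to deduce the uniform contraction from three ingredients: the fact that any $f\in\Hol(X,\Omega)$ factors as the inclusion $\iota\colon\Omega\hookrightarrow X$ composed with $f$ viewed as a map into $\Omega$; the strict Schwarz--Pick contraction of the inclusion $\iota$ when measured in the \emph{intrinsic} Poincar\'e distance $\omega_\Omega$ of $\Omega$; and a comparison, controlled by $R(\Omega,X)$, between $\omega_\Omega$ and $\omega_X$ on $\Omega$. Concretely, for $f\in\Hol(X,\Omega)$ and $z,w\in X$ we have $\omega_X\bigl(f(z),f(w)\bigr)=\omega_X\bigl(\iota(f(z)),\iota(f(w))\bigr)$, where now $f$ is regarded as an element of $\Hol(X,\Omega)$. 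By Theorem~\ref{th:SPhyp} applied to $f\colon X\to\Omega$ we get $\omega_\Omega\bigl(f(z),f(w)\bigr)\le\omega_X(z,w)$, so it suffices to show there is a constant $0<\ell<1$, depending only on $R(\Omega,X)$, such that $\omega_X(p,q)\le\ell\,\omega_\Omega(p,q)$ for all $p,q\in\Omega$.

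First I would establish that last comparison inequality. The inclusion $\iota\colon\Omega\to X$ is holomorphic and not a covering map (since $\Omega\subsetneq X$ is a proper subdomain), so by the equality case of Theorem~\ref{th:SPhyp} it is a strict contraction pointwise; but strictness alone does not give a uniform $\ell$, so one needs the Bloch hypothesis. The standard mechanism is to bound the ratio of the infinitesimal metrics: if $\lambda_\Omega$ and $\lambda_X$ denote the densities of the Poincar\'e metrics, then for $z\in\Omega$ the quantity $\lambda_X(z)/\lambda_\Omega(z)$ is controlled by the largest $\omega_X$-ball around $z$ contained in $\Omega$. Explicitly, if $B_X(z,r)\subseteq\Omega$ then monotonicity of the Poincar\'e metric under inclusion gives $\lambda_\Omega(z)\ge\lambda_{B_X(z,r)}(z)$, and since $B_X(z,r)$ is isometric to a Poincar\'e disk of radius $r$ one computes $\lambda_{B_X(z,r)}(z)=\lambda_X(z)\coth r$ (or the analogous explicit factor). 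Hence $\lambda_X(z)/\lambda_\Omega(z)\le\tanh r\le\tanh R(z;\Omega,X)\le\tanh R(\Omega,X)=:\ell<1$, the last inequality using precisely that $R(\Omega,X)<+\infty$. Integrating this pointwise bound along any path in $\Omega$ joining $p$ to $q$, and taking the infimum over such paths, yields $\omega_X(p,q)\le\ell\,\omega_\Omega(p,q)$ for all $p,q\in\Omega$.

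Combining the two displayed inequalities gives, for every $f\in\Hol(X,\Omega)$ and all $z,w\in X$,
\[
\omega_X\bigl(f(z),f(w)\bigr)\le\ell\,\omega_\Omega\bigl(f(z),f(w)\bigr)\le\ell\,\omega_X(z,w)\;,
\]
which is the assertion, with $\ell=\tanh R(\Omega,X)$ independent of $f$. I would also remark that $f(z),f(w)\in\Omega$ is exactly what lets us invoke the comparison inequality, and that the case $f(z)=f(w)$ is trivial.

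The main obstacle is the infinitesimal comparison $\lambda_X(z)/\lambda_\Omega(z)\le\tanh R(z;\Omega,X)$: one must justify the monotonicity $\lambda_\Omega\ge\lambda_{B_X(z,r)}$ for nested hyperbolic surfaces (immediate from the Schwarz--Pick lemma applied to the inclusion, or from the definition of the Poincar\'e metric as an infimum) and, more delicately, compute the metric density of $B_X(z,r)$ at its center. The latter requires knowing that the Poincar\'e ball $B_X(z,r)$, with its intrinsic hyperbolic structure, is holomorphically a disk, and pinning down the exact factor relating $\lambda_{B_X(z,r)}(z)$ to $\lambda_X(z)$ — this is where a small explicit computation with the disk model enters, though it is routine. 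Everything else (factoring through the inclusion, applying Theorem~\ref{th:SPhyp}, integrating the infinitesimal estimate) is formal. Since this is precisely Proposition~\ref{th:I.3.Bloch} attributed to Beardon--Carne--Minda--Ng, I would either reproduce this argument or simply cite \cite{BeardonCarneMindaNg2004} for the full details.
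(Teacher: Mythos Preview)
The paper does not prove this proposition; it is quoted in the preliminaries as a result of \cite{BeardonCarneMindaNg2004}. Your closing option of simply citing that reference is precisely what the paper does.

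Your sketched argument, however, contains a genuine error. In the key infinitesimal step you write that $B_X(z,r)\subseteq\Omega$ together with monotonicity gives $\lambda_\Omega(z)\ge\lambda_{B_X(z,r)}(z)$. The inequality is reversed: the Poincar\'e density is \emph{decreasing} under inclusion (smaller domain, larger metric), so $B_X(z,r)\subseteq\Omega$ yields only $\lambda_\Omega(z)\le\lambda_{B_X(z,r)}(z)$, an upper bound on $\lambda_\Omega$ that is useless for bounding $\lambda_X/\lambda_\Omega$ above by a constant less than~$1$. The Bloch hypothesis says that $\Omega$ does \emph{not} contain large $\omega_X$-balls; what it actually provides is that every $z\in\Omega$ has a point $w\in X\setminus\Omega$ with $\omega_X(z,w)\le R(\Omega,X)$. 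To obtain a lower bound on $\lambda_\Omega$ one must compare $\Omega$ with a \emph{larger} domain that still omits such a~$w$ (so that monotonicity gives $\lambda_\Omega\ge\lambda_{X\setminus\{w\}}$), and then bound $\lambda_{X\setminus\{w\}}/\lambda_X$ from below in terms of $\omega_X(z,w)$; after lifting to the universal cover~$\D$ this is essentially the mechanism in \cite{BeardonCarneMindaNg2004}. A secondary issue: your claim that $B_X(z,r)$ is ``isometric to a Poincar\'e disk of radius~$r$'' with $\lambda_{B_X(z,r)}(z)=\lambda_X(z)\coth r$ is not justified when $X$ is multiply connected, since Poincar\'e balls in $X$ need not be simply connected.
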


A \emph{hyperbolic domain}~$D$ in a compact Riemann surface~$\widehat X$ is a domain $D\subset\widehat X$ which is a hyperbolic Riemann surface on its own.
A useful property of the Poincar\'e distance of hyperbolic domains is the following: 

\begin{Proposition}
\label{th:bP}
Let $D\subset\widehat X$ be a hyperbolic domain in a compact Riemann surface.
Take~$\tau_0\in\de D$, and a sequence~$\{z_\nu\}\subset D$ 
converging to~$\tau_0$. Let $\{w_\nu\}\subset D$ be another sequence, and 
assume there is~$M>0$ so that
\[
\forevery{\nu\in\N}{\omega_D(z_\nu,w_\nu)< M\;.}
\]
Then $w_\nu\to\tau_0$ as~$\nu\to+\infty$.
\end{Proposition}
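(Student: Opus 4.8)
The plan is to argue by contradiction, exploiting the comparison between the Poincaré distance of the hyperbolic domain~$D$ and a more elementary quantity that controls convergence to the boundary in the ambient compact surface~$\widehat X$. Suppose that $w_\nu\not\to\tau_0$. Then, after passing to a subsequence, we may assume that $w_\nu\to w_\infty$ for some $w_\infty\in\overline D\setminus\{\tau_0\}$ (using compactness of~$\widehat X$), and either $w_\infty\in D$ or $w_\infty\in\de D$. In both cases I would derive a contradiction with the uniform bound $\omega_D(z_\nu,w_\nu)<M$ by showing that $\omega_D(z_\nu,w_\nu)\to+\infty$.

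First I would record the key estimate: for a hyperbolic domain $D\subset\widehat X$, the Poincaré distance blows up near the boundary, in the sense that if $\{z_\nu\}\subset D$ converges to a boundary point and $\{w_\nu\}$ stays in a compact subset of~$D$ (or converges to a different boundary point), then $\omega_D(z_\nu,w_\nu)\to+\infty$. The cleanest route is to fix a coordinate chart around $\tau_0$ in~$\widehat X$ identifying a neighbourhood of $\tau_0$ with a disk, so that $D$ intersected with this neighbourhood is contained in a punctured disk or a once-punctured disk minus the relevant boundary arcs; one then compares $\omega_D$ from above-below using the Schwarz--Pick lemma (Theorem~\ref{th:SPhyp}) with the Poincaré distance of a standard model domain (a punctured disk, or~$\D$ itself) whose metric is explicit. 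Monotonicity of the Poincaré distance under inclusion gives $\omega_D\ge\omega_{\widehat X\setminus\{\text{two points}\}}$ when $D$ omits at least two points of $\widehat X$, and the latter distance is computable enough to see the blow-up as one argument tends to a boundary point while the other stays bounded away.

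Concretely, in the case $w_\infty\in D$: pick $r>0$ with $\overline{B_D(w_\infty,r)}\subset D$; then for large~$\nu$ we have $w_\nu\in B_D(w_\infty,r)$, so $\omega_D(z_\nu,w_\nu)\ge\omega_D(z_\nu,w_\infty)-r$. Since $z_\nu\to\tau_0\in\de D$ and $\omega_D$ is complete, $\omega_D(z_\nu,w_\infty)\to+\infty$ (a sequence leaving every compact subset of a complete metric space has distance to a fixed point tending to infinity), contradicting the bound~$M$. In the case $w_\infty\in\de D$ with $w_\infty\ne\tau_0$: choose disjoint coordinate neighbourhoods $U$ of $\tau_0$ and $V$ of $w_\infty$ in~$\widehat X$; for large~$\nu$, $z_\nu\in U\cap D$ and $w_\nu\in V\cap D$, so any path in~$D$ from $z_\nu$ to $w_\nu$ must cross the compact ``annular'' region $D\setminus(U\cup V)$, and a fixed positive lower bound on the Poincaré length of such crossing paths yields $\omega_D(z_\nu,w_\nu)\ge c>0$; but to get a contradiction with~$M$ one needs more, namely that the portion of the path inside~$U$ near $\tau_0$ (respectively inside~$V$ near $w_\infty$) already has length tending to~$+\infty$, which again follows from the one-sided blow-up estimate near a boundary point obtained by comparison with the model metric on a punctured disk.

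The main obstacle is the second case, and more precisely making rigorous the ``blow-up near a boundary point'' estimate uniformly enough: one must handle both isolated-point boundary components (punctured-disk model, where $\omega$ behaves like $\tfrac12\log\log\tfrac1{|z|}$-type quantities) and Jordan-curve-type boundary behaviour (half-plane / disk model, where $\omega$ behaves like $\tfrac12\log\tfrac1{\operatorname{dist}}$), without assuming the boundary is nice — so the comparison domain should just be $\widehat X$ minus two suitably chosen points, whose universal cover is~$\D$, making $\omega$ of the comparison domain completely explicit via the covering. Once that comparison inequality $\omega_D(p,q)\ge\omega_{\widehat X\setminus\{a,b\}}(p,q)$ is in hand (valid as soon as $\{a,b\}\subset\widehat X\setminus D$, which we may arrange since $\de D\ne\emptyset$ and $\widehat X$ is compact, provided $\widehat X\setminus D$ has at least two points; if it has only one point $D$ is a once-punctured compact surface and a direct punctured-disk argument applies), the blow-up and hence the contradiction follow, completing the proof.
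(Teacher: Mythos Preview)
The paper does not give its own proof of this proposition; it records it as a standard property of the Poincar\'e distance and cites Milnor's book. So there is no in-paper argument to compare against, and your sketch is being judged on its own merits.

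Your strategy is essentially correct, with one genuine slip and one point to tighten. In Case~1 the relevant fact is that the Poincar\'e distance is \emph{proper} (closed balls are compact, as the paper notes), not merely complete; with that wording the argument is fine. In Case~2 your comparison-domain idea works, but the sentence ``the comparison domain should just be $\widehat X$ minus two suitably chosen points, whose universal cover is~$\D$'' is wrong when $\widehat X=\widehat{\C}$: the twice-punctured sphere is $\C^*$, which is parabolic, so no Poincar\'e metric is available. The fix is to remove a finite set $S\subset\widehat X\setminus D$ with $\tau_0\in S$, $w_\infty\notin S$, and $|S|$ large enough that $\Omega=\widehat X\setminus S$ is hyperbolic (one point suffices for genus~$\ge 1$, three for the sphere). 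Then $z_\nu$ tends to a boundary point of~$\Omega$ while $w_\nu$ tends to an \emph{interior} point of~$\Omega$, so properness of~$\omega_\Omega$ gives $\omega_\Omega(z_\nu,w_\nu)\to\infty$, and $\omega_D\ge\omega_\Omega$ yields the contradiction. Note that you must explicitly arrange $w_\infty\notin S$; otherwise you are back to two sequences both escaping to~$\partial\Omega$, which is exactly the difficulty you are trying to avoid. Such an $S$ can always be chosen except in the single residual case $\widehat X=\widehat{\C}$ with $\widehat{\C}\setminus D$ consisting of exactly three points, one of them~$w_\infty$; there $D$ is itself a thrice-punctured sphere and a short direct argument with the cusp geometry (or the explicit lower bound $\rho_D(z)\gtrsim\bigl(|z-\tau_0|\log\frac{1}{|z-\tau_0|}\bigr)^{-1}$ near each puncture) handles it.
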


For a proof see, e.g., \cite{Milnor2000}.

We shall need a few properties of the spaces of holomorphic maps between hyperbolic Riemann surfaces, that we shall always consider endowed with the compact-open topology (which is equivalent to the topology of uniform convergence on compact sets).
The first one is the classical Vitali theorem (for a proof see, e.g., \cite{Abate}*{Theorem 1.1.45}):

\begin{Theorem}[Vitali]
\label{th:Vitali}
Let $X$ and $Y$ be hyperbolic Riemann surfaces, and 
let $\{f_\nu\}$ be a sequence of functions in $\Hol(X,Y)$. Assume there is a 
set $A\subset X$ with at least one accumulation point such that $\{f_\nu(z)\}$ 
converges for every $z\in A$. Then $\{f_\nu\}$ converges uniformly on compact 
subsets of~$X$ to a function $f\in\Hol(X,Y)$.
\end{Theorem}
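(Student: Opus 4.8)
The plan is to combine the Schwarz--Pick lemma (Theorem~\ref{th:SPhyp}), which makes the family $\{f_\nu\}$ automatically equicontinuous, with an Arzel\`a--Ascoli normal-family argument, and then to promote subsequential convergence to genuine convergence by means of the identity principle. First I would fix a point $z_1\in A$; by hypothesis $f_\nu(z_1)\to w_1$ for some $w_1\in Y$, so $\{f_\nu(z_1)\}\cup\{w_1\}$ is a compact subset of~$Y$. Since by Theorem~\ref{th:SPhyp} each $f_\nu$ is $1$-Lipschitz from $(X,\omega_X)$ to $(Y,\omega_Y)$, for every compact $K\subseteq X$ one has $\omega_Y\bigl(f_\nu(z),w_1\bigr)\le\sup_{z\in K}\omega_X(z,z_1)+\sup_\nu\omega_Y\bigl(f_\nu(z_1),w_1\bigr)=:M_K<+\infty$ for all $z\in K$ and all $\nu$; as $\omega_Y$ is complete, $\overline{B_Y(w_1,M_K)}$ is compact, so the orbits $\{f_\nu(z)\}$ are relatively compact in~$Y$, uniformly for $z$ in compacta. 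In particular no subsequence of $\{f_\nu\}$ can be compactly divergent.

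Equicontinuity together with this local relative compactness of the images in~$Y$ is exactly what Arzel\`a--Ascoli requires (recall that $X$ is second countable by Rad\'o's theorem and that $Y$ is a complete metric space), so every subsequence of $\{f_\nu\}$ has a further subsequence converging uniformly on compact subsets of~$X$ to some continuous $g\colon X\to Y$. Each such $g$ is in fact holomorphic: near any point of~$X$ the map $g$, and hence $f_{\nu_k}$ for large~$k$, takes values in a single coordinate chart of~$Y$, and one is thus reduced to the classical fact that a locally uniform limit of $\C$-valued holomorphic functions is holomorphic. (Alternatively, one may quote here that a hyperbolic Riemann surface is taut, which yields the normal-family conclusion directly once compact divergence has been excluded as above.)

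It remains to check that all these subsequential limits coincide. If two subsequences of $\{f_\nu\}$ converge uniformly on compacta to $g$ and to $h$ in~$\Hol(X,Y)$, then for every $z\in A$ the full sequence $\{f_\nu(z)\}$ converges, whence $g(z)=h(z)$; thus $g$ and $h$ agree on~$A$, a set with an accumulation point in the connected surface~$X$, and the identity principle for holomorphic maps between Riemann surfaces forces $g\equiv h$. Call this common map~$f$; since it restricts on~$A$ to the prescribed pointwise limit, it is well defined. The standard subsequence principle --- if every subsequence of a sequence in a topological space has a further subsequence converging to a fixed point~$f$, then the whole sequence converges to~$f$ --- then gives $f_\nu\to f$ uniformly on compact subsets of~$X$, with $f\in\Hol(X,Y)$.

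The one genuine point, I expect, is ruling out that a subsequence escapes toward the ideal boundary of~$Y$ (i.e.\ becomes compactly divergent): this is precisely where the completeness of~$\omega_Y$ and the contraction estimate of Theorem~\ref{th:SPhyp} enter, and where it is used that the hypothesis provides convergence --- hence boundedness --- of $\{f_\nu(z)\}$ at at least one point of~$A$. The rest is the routine packaging of Arzel\`a--Ascoli and the identity principle.
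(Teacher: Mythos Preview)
The paper does not actually prove this statement: Theorem~\ref{th:Vitali} is quoted in the Preliminaries section with the parenthetical ``for a proof see, e.g., \cite{Abate}*{Theorem 1.1.45}'', and no argument is given. So there is nothing in the paper itself to compare your proof against.

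That said, your argument is correct and is essentially the standard proof one finds in the literature. The Schwarz--Pick lemma gives equicontinuity; convergence of $\{f_\nu(z_1)\}$ at a single point of~$A$ combined with the $1$-Lipschitz bound and the completeness of~$\omega_Y$ traps all images of compacta inside fixed compact Poincar\'e balls, ruling out compact divergence; Arzel\`a--Ascoli then yields normality; holomorphy of the limits follows from the local chart argument you sketch; and the identity principle together with the subsequence principle upgrades subsequential convergence to full convergence. Each step is sound, and your identification of the one non-formal point---using completeness of~$\omega_Y$ to exclude escape to the ideal boundary---is exactly right. One cosmetic remark: when you invoke the identity principle for maps $g,h\colon X\to Y$, it is worth noting explicitly that the argument is local (pass to charts on~$Y$ near a common value, reduce to $\C$-valued functions, then use open--closedness of the coincidence set on the connected~$X$); you allude to this but the phrase ``identity principle for holomorphic maps between Riemann surfaces'' might benefit from that one extra line if this were to stand as a self-contained proof.
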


The next two results are again due to Heins~\cite{Heins1941b}:

\begin{Theorem}
\label{th:I.2.unocinque} 
Let $X$ be a hyperbolic Riemann surface 
with non-abelian fundamental group. Then $\id_X$~is isolated in~$\Hol(X,X)$.
In particular, $\Aut(X)$~is discrete, and each $f\in\Aut(X)$ is isolated in~$\Hol(X,X)$.
\end{Theorem}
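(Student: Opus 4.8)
The plan is to pass to the universal cover and reduce the statement to a rigidity property of holomorphic self-maps of~$\D$ that commute with a pair of hyperbolic M\"obius transformations. By uniformization write $X=\D/\Gamma$, where $\pi\colon\D\to X$ is the universal covering and $\Gamma$ is a torsion-free Fuchsian group isomorphic to~$\pi_1(X)$; since $\Gamma$ is non-abelian it is non-elementary (a torsion-free elementary Fuchsian group being cyclic, hence abelian), so it contains two hyperbolic elements $g_1,g_2$ with no common fixed point on~$\partial\D$. It is enough to show that $\id_X$ is isolated in~$\Hol(X,X)$: for any $f\in\Aut(X)$ the map $g\mapsto f^{-1}\circ g$ is a homeomorphism of~$\Hol(X,X)$ sending $f$ to~$\id_X$, so $\{f\}$ is open in~$\Hol(X,X)$ as well, and a fortiori open in the subspace~$\Aut(X)$; thus every $f\in\Aut(X)$ is isolated in~$\Hol(X,X)$ and $\Aut(X)$ is discrete.

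Suppose then, for contradiction, that there is a sequence $f_\nu\to\id_X$ in~$\Hol(X,X)$ with $f_\nu\ne\id_X$. First I would lift: choosing $\tilde x_0\in\pi^{-1}(x_0)$ over a base point $x_0\in X$ and using $f_\nu(x_0)\to x_0$, pick lifts $\tilde f_\nu\in\Hol(\D,\D)$ with $\pi\circ\tilde f_\nu=f_\nu\circ\pi$ and $\tilde f_\nu(\tilde x_0)\to\tilde x_0$. By normality of the family $\Hol(\D,\D)$ every subsequential limit of $\{\tilde f_\nu\}$ is a holomorphic lift of~$\id_X$ fixing~$\tilde x_0$, hence equals~$\id_\D$, so $\tilde f_\nu\to\id_\D$. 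Now for each $\gamma\in\Gamma$ there is a unique $\gamma_\nu\in\Gamma$ with $\tilde f_\nu\circ\gamma=\gamma_\nu\circ\tilde f_\nu$ (two lifts of the same map differ by a deck transformation); evaluating at~$\tilde x_0$, using that $\gamma_\nu$ is an isometry of~$\omega_\D$ and that $\tilde f_\nu\to\id_\D$ uniformly on compact subsets, one gets $\gamma_\nu(\tilde x_0)\to\gamma(\tilde x_0)$, and since the orbit $\Gamma\tilde x_0$ is discrete and $\Gamma$ is torsion-free this forces $\gamma_\nu=\gamma$ for $\nu$ large. Applying this to $\gamma=g_1$ and $\gamma=g_2$, we obtain that for all large~$\nu$ the map $\phi:=\tilde f_\nu$ commutes with both $g_1$ and~$g_2$.

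It remains to show that such a $\phi$ is~$\id_\D$, which gives $f_\nu=\id_X$ and the desired contradiction. I would first prove $\phi\in\Aut(\D)$. Conjugating $g_1,g_2,\phi$ by a fixed element of $\Aut(\D)$, we may assume (working in the half-plane model~$\H$) that $g_1(z)=\lambda z$ with $\lambda>1$; then $\phi\circ g_1=g_1\circ\phi$ gives $\phi(\lambda i)=\lambda\,\phi(i)$, whence
\[
\omega_\H\bigl(\phi(i),\phi(\lambda i)\bigr)=\omega_\H\bigl(\phi(i),g_1(\phi(i))\bigr)\ge\log\lambda=\omega_\H(i,\lambda i),
\]
the inequality holding because $\log\lambda$ is the translation length of the hyperbolic isometry~$g_1$, so that $\omega_\H(w,g_1(w))\ge\log\lambda$ for every $w\in\H$. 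Comparing with the Schwarz--Pick inequality (Theorem~\ref{th:SPhyp}) we get equality there at the distinct points $i\ne\lambda i$, hence $\phi$ is a covering self-map of the simply connected surface~$\H$, i.e.\ $\phi\in\Aut(\H)$. Finally, $\phi$ is a M\"obius transformation commuting with the hyperbolic element~$g_1$, so either $\phi=\id$ or $\phi$ is hyperbolic with $\mathrm{Fix}(\phi)=\mathrm{Fix}(g_1)$; the same argument with~$g_2$ gives $\mathrm{Fix}(\phi)=\mathrm{Fix}(g_2)$ unless $\phi=\id$. Since $g_1$ and~$g_2$ have no common fixed point, $\phi=\id_\D$, completing the contradiction.

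I expect the one genuinely delicate point to be the reduction to a two-generator centralizer: when $\pi_1(X)$ is infinitely generated one cannot force the induced endomorphisms $(f_\nu)_*$ of~$\Gamma$ to be trivial on all of~$\Gamma$ at once for~$\nu$ large, so the argument must be set up so that commuting with the single fixed pair $g_1,g_2$ already suffices---which it does precisely because the Schwarz--Pick rigidity step upgrades ``$\phi$ commutes with one hyperbolic element'' to ``$\phi\in\Aut(\D)$'', after which a one-line M\"obius computation finishes. The auxiliary facts invoked (a non-elementary Fuchsian group contains two hyperbolic elements without common fixed point; the translation length of a hyperbolic isometry; the structure of centralizers of hyperbolic M\"obius transformations) are all classical.
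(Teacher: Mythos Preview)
The paper does not supply its own proof of this theorem: it appears in the Preliminaries section as a result of Heins, with a pointer to Abate's book for a modern treatment. So there is no in-paper argument to compare yours against.

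Your argument is correct and essentially self-contained. The key steps---lifting to the universal cover, using discreteness of the $\Gamma$-orbit together with freeness of the action to force $\tilde f_\nu$ eventually to commute with any fixed finite set of deck transformations, and then the Schwarz--Pick equality step showing that a holomorphic self-map of~$\H$ commuting with a single hyperbolic isometry must already be an automorphism---all go through as written. One small point worth making explicit in step~8: the argument relies on the fact that the chosen point~$i$ lies on the axis of $g_1(z)=\lambda z$, so that $\omega_\H(i,\lambda i)$ equals the translation length~$\log\lambda$ exactly; this is what converts the translation-length lower bound $\omega_\H\bigl(\phi(i),g_1(\phi(i))\bigr)\ge\log\lambda$ into an equality in Schwarz--Pick. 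Your concluding remark is also on the mark: trying to force $(\tilde f_\nu)_*=\id_\Gamma$ on all of~$\Gamma$ simultaneously would fail when $\Gamma$ is infinitely generated, and working with the fixed pair $g_1,g_2$ is precisely what keeps the proof finitary while still pinning $\phi$ down to the identity.
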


\begin{Proposition}
\label{th:I.2.unootto} 
Let $X$ be a hyperbolic Riemann surface 
not biholomorphic to~$\D$ or~$\D^*=\D\setminus\{0\}$. Then $\Aut(X)$~is open and closed 
in~$\Hol(X,X)$.
\end{Proposition}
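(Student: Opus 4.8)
The plan is to treat the two assertions separately, and to split the proof of openness according to whether the fundamental group of $X$ is abelian.

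\emph{Closedness of $\Aut(X)$} in fact holds for every hyperbolic Riemann surface, by a normal families argument. Suppose $\{f_\nu\}\subset\Aut(X)$ converges in $\Hol(X,X)$ to $f$. Each $f_\nu$ is an isometry for $\omega_X$, so, letting $\nu\to\infty$ and using the continuity of $\omega_X$ together with the pointwise convergence $f_\nu\to f$, we get $\omega_X\bigl(f(z_1),f(z_2)\bigr)=\omega_X(z_1,z_2)$ for all $z_1,z_2\in X$. To upgrade $f$ to an automorphism I would bring in the inverses $f_\nu^{-1}\in\Aut(X)$: fixing $p_0\in X$ one has $\omega_X\bigl(f_\nu^{-1}(p_0),p_0\bigr)=\omega_X\bigl(p_0,f_\nu(p_0)\bigr)\to\omega_X\bigl(p_0,f(p_0)\bigr)$, so the sequence $\{f_\nu^{-1}(p_0)\}$ is bounded; since all of $\Hol(X,X)$ is equicontinuous by Theorem~\ref{th:SPhyp}, the family $\{f_\nu^{-1}\}$ is then uniformly bounded on compact subsets of $X$, hence normal. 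Passing to a subsequential limit $g\in\Hol(X,X)$ and using continuity of composition, one obtains $f\circ g=g\circ f=\id_X$, so $f\in\Aut(X)$.

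\emph{Openness} is where the hypothesis enters. If $\pi_1(X)$ is non-abelian there is nothing to prove: Theorem~\ref{th:I.2.unocinque} says that every $f\in\Aut(X)$ is isolated in $\Hol(X,X)$, so $\Aut(X)$ is open. Since $X$ is hyperbolic and $X\not\cong\D,\D^*$, the only remaining possibility is that $X$ is an annulus $A=\{z:r<|z|<1\}$ with $0<r<1$, and this is the heart of the matter. Because $f\mapsto h\circ f$ is a homeomorphism of $\Hol(A,A)$ for every $h\in\Aut(A)$, it suffices to find a neighbourhood $\mathcal V$ of $\id_A$ in $\Hol(A,A)$ with $\mathcal V\subset\Aut(A)$. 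To produce such a $\mathcal V$, I would use the core closed geodesic $\gamma\subset A$, of Poincar\'e length $L$. Shrink $\mathcal V$ so that for every $f\in\mathcal V$ the endomorphism induced by $f$ on $\pi_1(A)=\Z$ is the identity; this is an open condition because the class $[f\circ\gamma]$ depends, up to homotopy, only on the values of $f$ on the compact set $\gamma$. For such $f$ the loop $f\circ\gamma$ is freely homotopic to $\gamma$, so its Poincar\'e length is at least $L$ (the closed geodesic being the unique length-minimiser in its free homotopy class), while the Schwarz-Pick inequality forces it to be at most $L$. Hence $f\circ\gamma$ has length exactly $L$, therefore coincides with $\gamma$, and $f$ restricts to an isometry of $\gamma$ onto itself. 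In particular $f$ attains equality in the Schwarz-Pick inequality at two distinct points of $\gamma$, so by Theorem~\ref{th:SPhyp} the map $f$ is a holomorphic covering self-map of $A$; since a $d$-fold holomorphic cover of $A$ has modulus $(1/d)$ times that of $A$, necessarily $d=1$, that is $f\in\Aut(A)$. Thus $\mathcal V\subset\Aut(A)$, and openness of $\Aut(A)$ follows by homogeneity.

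The main obstacle is the annulus case: closedness is a soft normal-families argument, and the non-abelian case is handed to us by Theorem~\ref{th:I.2.unocinque}, but ruling out non-automorphic self-maps near the identity of the annulus genuinely requires the rigidity of closed geodesics on hyperbolic surfaces together with the equality clause of the Schwarz-Pick lemma. The points that will need care are: the verification that the homotopy class of $f\circ\gamma$ is locally constant in the compact-open topology; the standard-but-nontrivial fact that a loop of minimal Poincar\'e length in a given free homotopy class is the corresponding closed geodesic, so that $f$ indeed restricts to an isometry of $\gamma$; and the passage from Schwarz-Pick equality to the covering property via Theorem~\ref{th:SPhyp}, together with the modulus computation showing that a holomorphic self-covering of a genuine annulus has degree one.
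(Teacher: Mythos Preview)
The paper does not supply its own proof of this proposition; it attributes the result to Heins~\cite{Heins1941b} and refers the reader to \cite{Abate} for a modern treatment. So there is no in-paper argument to compare yours against.

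Your proof is correct. The closedness argument via normal families works on any hyperbolic Riemann surface, as you note; the point that $\{f_\nu^{-1}(p_0)\}$ stays in a compact set, hence $\{f_\nu^{-1}\}$ is relatively compact in $\Hol(X,X)$, uses completeness of~$\omega_X$ and is fine. For openness, the reduction to the annulus via Theorem~\ref{th:I.2.unocinque} is exactly right, and your core-geodesic argument for the annulus is valid. Two of the steps you already flag do need the care you anticipate. First, the passage from ``$f\circ\gamma$ has length $L$'' to ``$f\circ\gamma=\gamma$'' uses that the closed geodesic is the \emph{unique} length-minimising loop in its free homotopy class (true on any hyperbolic surface, but worth stating explicitly). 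Second, to invoke the distance form of Schwarz--Pick equality in Theorem~\ref{th:SPhyp} you should take $z_1\ne z_2$ on $\gamma$ close enough that the short arc of~$\gamma$ realises $\omega_A(z_1,z_2)$; then the arc-length isometry $f|_\gamma\colon\gamma\to\gamma$ gives $\omega_A\bigl(f(z_1),f(z_2)\bigr)=\omega_A(z_1,z_2)$, and Theorem~\ref{th:SPhyp} applies. The modulus computation at the end is correct: the connected $d$-fold cover of $A(r,1)$ is $A(r^{1/d},1)$ via $z\mapsto z^d$, with modulus $\tfrac{1}{d}$ that of $A(r,1)$, so a holomorphic self-covering of a genuine annulus must have degree one.
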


For a modern proof see, e.g., \cite{Abate}*{Theorem 1.2.19, Corollary~1.2.24}. 

The next result is another consequence of Heins' works; we include a proof for completeness.

\begin{Corollary}
\label{th:I.3.selfcov}
Let $X$ be a hyperbolic Riemann surface and $f\in\Hol(X,X)$ a self-covering of~$X$. Assume that $f$ has a fixed point~$z_0\in X$. Then $f$ is a periodic or pseudoperiodic automorphisms of~$X$, and $z_0$ is not attracting.
\end{Corollary}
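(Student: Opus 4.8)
The plan is to leverage the Schwarz--Pick lemma (Theorem~\ref{th:SPhyp}) together with the classification results of Heins (Theorem~\ref{th:I.2.unocinque} and Proposition~\ref{th:I.2.unootto}) to pin down the nature of a fixed self-covering. First I would observe that a self-covering $f\colon X\to X$ is in particular a covering map, so by the equality case of Theorem~\ref{th:SPhyp} we have $\omega_X(f(z_1),f(z_2))=\omega_X(z_1,z_2)$ for all $z_1,z_2\in X$; that is, $f$ is a $\omega_X$-isometry. Combined with the hypothesis $f(z_0)=z_0$, this forces $f$ to map every Poincar\'e ball $B_X(z_0,R)$ onto itself, hence $f$ is surjective, and being an isometry it is injective as well. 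Therefore $f\in\Aut(X)$. In particular the derivative of $f$ at the fixed point $z_0$ has modulus~$1$ (an automorphism fixing a point is, in local coordinates, conjugate to a rotation), so $z_0$ is not attracting; this already disposes of the last assertion.

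It remains to show $f$ is periodic or pseudoperiodic. Here I would split into the two cases dictated by the available structure theorems. If $X$ has non-abelian fundamental group, then by Theorem~\ref{th:I.2.unocinque} the group $\Aut(X)$ is discrete and $\id_X$ is isolated in $\Hol(X,X)$; the subgroup of $\Aut(X)$ fixing $z_0$ is then a discrete subgroup of the circle (via the derivative representation $g\mapsto g'(z_0)$, which is faithful on the stabilizer of $z_0$ since an automorphism fixing a point with derivative $1$ is the identity), hence finite, so some power $f^q$ has derivative $1$ at $z_0$ and thus $f^q=\id_X$: $f$ is periodic. If instead $\pi_1(X)$ is abelian, then $X$ is biholomorphic to $\D$, $\D^*$, or an annulus; in each of these cases an automorphism is conjugate to a rotation (of the disk, the punctured disk, or the annulus), and a rotation is either periodic (rational angle) or pseudoperiodic (irrational angle, by the usual equidistribution/Kronecker argument producing a subsequence of powers converging to $\id_X$), so we are done. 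One should note that when $X\cong\D$ the map $f$ fixing $z_0$ is precisely an elliptic automorphism, which the introductory discussion already records as periodic or pseudoperiodic.

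The main obstacle, I expect, is the bookkeeping in the non-abelian case: one must be careful that the stabilizer of $z_0$ in $\Aut(X)$ really injects into the unit circle via the derivative, and that discreteness of $\Aut(X)$ (or isolation of $\id_X$) forces this subgroup to be finite rather than merely discrete --- for this I would use that the stabilizer is a subgroup of $S^1$ that is discrete in the relevant topology, hence finite, and that $g\mapsto g'(z_0)$ is continuous so a sequence of stabilizer elements accumulating at $\id_X$ would contradict Theorem~\ref{th:I.2.unocinque}. Everything else is a direct application of the quoted results; the one genuinely new input is the isometry observation from the equality clause of Schwarz--Pick, which is what converts ``self-covering with a fixed point'' into ``automorphism'' in the first place.
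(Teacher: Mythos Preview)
Your opening step contains a genuine error: the equality clause of Theorem~\ref{th:SPhyp} does \emph{not} assert that a covering map $f\colon X\to X$ satisfies $\omega_X\bigl(f(z_1),f(z_2)\bigr)=\omega_X(z_1,z_2)$ for all $z_1,z_2$. It says only that, given $w_1,w_2$ in the target and one preimage $z_1$ of~$w_1$, there exists \emph{some} preimage $z_2$ of~$w_2$ realising the distance. A covering map is a local isometry for the Poincar\'e metric, not in general a global one; for instance $f(z)=z^2$ on $\D^*$ is a self-covering with $f(1/2)=f(-1/2)$, so $\omega_{\D^*}\bigl(f(1/2),f(-1/2)\bigr)=0$ while $\omega_{\D^*}(1/2,-1/2)>0$. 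Your deduction that $f$ is injective therefore collapses, and with it the direct claim that $f\in\Aut(X)$. The fixed-point hypothesis is precisely what rules out such examples, but your argument does not invoke it at this stage.

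The paper instead lifts to the universal cover: pick $\tilde z_0\in\pi_X^{-1}(z_0)$ and a lift $\tilde f\colon\D\to\D$ with $\tilde f(\tilde z_0)=\tilde z_0$; a lift of a covering is again a covering, and a self-covering of the simply connected surface~$\D$ must be an automorphism, so $|\tilde f'(\tilde z_0)|=1$ and hence $|f'(z_0)|=1$. From here the paper does not carry out your case analysis on $\pi_1(X)$ at all: once $z_0$ is a non-attracting fixed point, Theorem~\ref{th:I.3.hypRiemDW} directly eliminates alternatives (i) and~(iv) (the constant orbit $f^k(z_0)=z_0$ prevents compact divergence), leaving exactly (ii) or~(iii). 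Your stabiliser-in-$S^1$ argument is essentially sound once $f$ is known to be an automorphism, but it is redundant given that Heins' classification is already available.
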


\begin{proof}
Let $\pi_X\colon\D\to X$ the universal covering map.
Given $\tilde z_0\in\pi_X^{-1}(z_0)$ we can choose a lifting $\tilde f\colon\D\to\D$ of~$f$ such that $\tilde f(\tilde z_0)=\tilde z_0$. Since (see, e.g., \cite{Lee2000}*{Lemma~12.1})
$\tilde f$ is a self-covering of~$\D$, it must be an automorphism; therefore $|\tilde f'(\tilde z_0)|=1$, by the classical Schwarz-Pick lemma. This clearly implies that $|f'(z_0)|=1$, 
and the assertion follows from Theorem~\ref{th:I.3.hypRiemDW}.
\end{proof}

Finally, we recall a few standard facts about doubly connected hyperbolic Riemann surfaces; for proofs see, e.g., \cite{Abate}.

A Riemann surface is \emph{doubly connected} if and only if its fundamental group is isomorphic to~$\mathbb{Z}$. A doubly connected hyperbolic Riemann surface is necessarily biholomorphic either to the pointed disk~$\D^*=\D\setminus\{0\}$ or to an annulus 
\[
A(r,1)=\{z\in\C\mid r<|z|<1\}
\] 
for some $0<r<1$. A hyperbolic Riemann surface has abelian fundamental group if and only if it is simply connected (and hence is biholomorphic to~$\D$) or doubly connected.


Finally, we can completely describe the automorphisms of doubly connected hyperbolic Riemann surfaces. Indeed, every automorphism of $\D^*$ is of the form $\gamma(z)=e^{i \theta}z$ for some $\theta\in\R$; and every automorphism of $A(r,1)$ is either of the form 
$\gamma(z)=e^{i\theta}z$ or of the form $\gamma(z)=e^{i\theta}rz^{-1}$ for some 
$\theta\in\R$.

\section{Random iteration on Bloch domains}
\label{sec:2}

In this section we shall study left iterated function systems in $\Hol(X,\Omega)$, where $\Omega\subset X$ is a Bloch domain in a hyperbolic Riemann surface~$X$.
As in~\cite{BeardonCarneMindaNg2004} the main result is of topological nature:

\begin{Theorem}
\label{th:I.3.toprand}
Let $\{f_\nu\}\subset C^0(X,X)$ be a sequence of continuous self-maps of a metric space $(X,d)$, and assume that there exists $0<\ell<1$ such that
\begin{equation}
\forevery{x,y\in X\ \forall \nu\in\N}{d\bigl(f_\nu(x),f_\nu(y)\bigr)\le \ell d(x,y)\;.}
\label{eq:I.3.unifLip}
\end{equation}
Let $\{L_\nu\}$ be the left iterated function system generated by~$\{f_\nu\}$. Then
\begin{enumerate}
\item[\rm(i)] every (pointwise) limit point of $\{L_\nu\}$ is constant;
\item[\rm(ii)] if a subsequence $\{L_{\nu_k}\}$ converges pointwise to a constant $x_0\in X$ then it converges to~$x_0$ uniformly on compact subsets;
\item[\rm(iii)] assume that every $f_\nu$ has a (necessarily unique) fixed point $x_\nu\in X$. Then $\{L_\nu\}$ converges uniformly on compact subsets to a constant function if and only if the sequence $\{x_\nu\}$ converges in~$X$.
\end{enumerate}
\end{Theorem}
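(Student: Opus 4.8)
The plan rests on a single elementary remark: since $L_\nu=f_\nu\circ L_{\nu-1}$ and every $f_\nu$ satisfies~\eqref{eq:I.3.unifLip}, an immediate induction on~$\nu$ (with base case $L_0=f_0$) gives
\[
d\bigl(L_\nu(x),L_\nu(y)\bigr)\le\ell^{\,\nu+1}\,d(x,y)\qquad\text{for all }x,y\in X\text{ and }\nu\in\N,
\]
so that $L_\nu$ is $\ell^{\nu+1}$-Lipschitz and the Lipschitz constants tend to~$0$. Part~(i) is then immediate: if $g$ is a pointwise limit point of~$\{L_\nu\}$, for fixed $x,y\in X$ and $\eps>0$ one finds arbitrarily large~$\nu$ with $d\bigl(L_\nu(x),g(x)\bigr)<\eps$ and $d\bigl(L_\nu(y),g(y)\bigr)<\eps$, whence $d\bigl(g(x),g(y)\bigr)\le 2\eps+\ell^{\nu+1}d(x,y)$; letting $\nu\to\infty$ and then $\eps\to0$ yields $g(x)=g(y)$, so $g$ is constant. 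Part~(ii) is equally quick: if $L_{\nu_k}\to x_0$ pointwise, fix any base point $p\in X$; for a compact $K\subseteq X$ one has $D:=\sup_{x\in K}d(x,p)<\infty$, and
\[
\sup_{x\in K}d\bigl(L_{\nu_k}(x),x_0\bigr)\le\ell^{\nu_k+1}D+d\bigl(L_{\nu_k}(p),x_0\bigr)\longrightarrow 0\qquad(k\to\infty).
\]

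For part~(iii) note first that the fixed point~$x_\nu$ of~$f_\nu$, assumed to exist, is unique, since $d(x,y)=d\bigl(f_\nu(x),f_\nu(y)\bigr)\le\ell\,d(x,y)$ forces $x=y$; set $\delta_\nu=d(x_{\nu-1},x_\nu)$. Suppose $x_\nu\to x_\infty$, so $\delta_\nu\to0$. Fix $x\in X$ and put $a_\nu=d\bigl(L_\nu(x),x_\nu\bigr)$. Using $f_\nu(x_\nu)=x_\nu$, the contraction property and the triangle inequality,
\[
a_\nu=d\bigl(f_\nu(L_{\nu-1}(x)),f_\nu(x_\nu)\bigr)\le\ell\,d\bigl(L_{\nu-1}(x),x_\nu\bigr)\le\ell\,(a_{\nu-1}+\delta_\nu),
\]
with $a_0\le\ell\,d(x,x_0)$. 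Unrolling the recursion bounds $a_\nu$ by $\ell^{\nu+1}d(x,x_0)+\sum_{j=1}^{\nu}\ell^{\,\nu-j+1}\delta_j$, and a routine splitting of this convolution sum (a finite head of fixed length swept to~$0$ by the factor $\ell^{\nu-N}$, the remaining tail bounded by $\eps\,\ell/(1-\ell)$) shows $a_\nu\to0$. Hence $d\bigl(L_\nu(x),x_\infty\bigr)\le a_\nu+d(x_\nu,x_\infty)\to0$ for every~$x$, i.e.\ $\{L_\nu\}$ converges pointwise to the constant~$x_\infty$, and part~(ii) upgrades this to uniform convergence on compacta.

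For the converse, assume $\{L_\nu\}$ converges uniformly on compacta to a constant~$c\in X$, so in particular $L_\nu(x)\to c$ for every~$x$. Fixing~$x\in X$ and using $f_\nu(x_\nu)=x_\nu$ once more,
\[
d\bigl(x_\nu,L_\nu(x)\bigr)=d\bigl(f_\nu(x_\nu),f_\nu(L_{\nu-1}(x))\bigr)\le\ell\,d\bigl(x_\nu,L_{\nu-1}(x)\bigr)\le\ell\,d(x_\nu,c)+\ell\,d\bigl(c,L_{\nu-1}(x)\bigr),
\]
whence, combining with $d(x_\nu,c)\le d\bigl(x_\nu,L_\nu(x)\bigr)+d\bigl(L_\nu(x),c\bigr)$,
\[
(1-\ell)\,d(x_\nu,c)\le\ell\,d\bigl(c,L_{\nu-1}(x)\bigr)+d\bigl(L_\nu(x),c\bigr)\longrightarrow 0,
\]
and since $1-\ell>0$ we conclude $x_\nu\to c$. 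I do not expect a serious obstacle anywhere: the whole argument is driven by the single estimate $d(L_\nu(x),L_\nu(y))\le\ell^{\nu+1}d(x,y)$, and the only points demanding a little care are the elementary $\eps$-splitting of the convolution sum in the direct implication of~(iii) and the observation that in the converse one should solve the resulting inequality for $d(x_\nu,c)$ using $1-\ell>0$ rather than attempt to iterate it.
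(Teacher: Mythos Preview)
Your proof is correct and follows essentially the same plan as the paper: parts~(i), (ii) and the forward implication of~(iii) are handled by the same contraction estimate $d\bigl(L_\nu(x),L_\nu(y)\bigr)\le\ell^{\nu+1}d(x,y)$ and the same convolution-sum bound (your recursion $a_\nu\le\ell(a_{\nu-1}+\delta_\nu)$ unrolls to precisely the paper's telescoped sum). The one genuine difference is in the converse of~(iii): the paper argues by contradiction, assuming a subsequence $\{x_{\nu_k}\}$ stays at distance greater than~$r$ from the limit and then chaining inequalities with the auxiliary constant $\frac{1-\ell}{1+\ell}r$ to force $d(x_{\nu_k},x_\infty)\le r$; your direct argument---bounding $d(x_\nu,L_\nu(x))$ via the contraction and then solving $(1-\ell)\,d(x_\nu,c)\le\ell\,d(c,L_{\nu-1}(x))+d(L_\nu(x),c)$---is shorter and avoids the contradiction scaffolding, which is a small gain in clarity.
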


\begin{proof}
(i) Assume that a subsequence $\{L_{\nu_k}\}$ converges pointwise to a map $g\in C^0(X,X)$. Then for every $x$,~$y\in X$ we have
\[
d\bigl(g(x),g(y)\bigr)=\!\lim_{k\to+\infty}d\bigl(f_{\nu_k}\circ\cdots\circ f_0(x),f_{\nu_k}\circ\cdots\circ f_0(y)\bigr)\le\lim_{k\to+\infty}\ell^{\nu_k+1}d(x,y)=0\;,
\]
and thus $g$ is constant. 

(ii) Assume that $L_{\nu_k}(x)\to x_0$ for each $x\in X$. Let $K\subseteq X$ be compact, and let $d_0=\max_{x\in K}\{d(x_0,x)\}<+\infty$. Then
\[
\begin{aligned}
d\bigl(L_{\nu_k}(x),x_0)&\le d\bigl(L_{\nu_k}(x),L_{\nu_k}(x_0)\bigr)+d\bigl(L_{\nu_k}(x_0),x_0\bigr)\\
&\le \ell^{\nu_k+1} d(x,x_0)+d\bigl(L_\nu(x_0),x_0)\le \ell^{\nu_k+1} d_0+d\bigl(L_{\nu_k}(x_0),x_0)\;,
\end{aligned}
\]
for all $x\in K$, and thus $L_{\nu_k}\to x_0$ uniformly on~$K$. 

(iii) Assume first that $x_\nu\to x_\infty\in X$. Then for all $x\in X$ we have
\[
\begin{aligned}
d\bigl(L_\nu(x),\,x_\infty\bigr)&\le d\bigl(f_\nu\circ \cdots\circ f_0(x),f_\nu(x_\nu)\bigr)+d(x_\nu,x_\infty)\\
&\le \ell d\bigl(f_{\nu-1}\circ \cdots\circ f_0(x),x_\nu\bigr)+d(x_\nu,x_\infty)\\
&\le\cdots\\
\noalign{\vskip-10pt}
&\le\ell^{\nu+1}d(x,x_0)+\sum_{j=0}^{\nu-1}\ell^{\nu-j}d(x_j,x_{j+1})+d(x_\nu,x_\infty)\;.
\end{aligned}
\]
The first and third addends in the last line clearly goes to zero as $\nu\to+\infty$. To prove that the second addend goes to zero too, let $M=\max_j d(x_j,x_{j+1})<+\infty$, 
and fix $\eps>0$. Choose $\nu_0\in\N$ so that $d(x_j,x_{j+1})<\eps(1-\ell)/2$ as soon as $j\ge\nu_0$ and $\nu_1\in\N$ such that $\ell^j<\eps(1-\ell)/(2M)$ as soon as
$j\ge\nu_1$. Then if $\nu\ge\nu_0+\nu_1$ we have
\[
\begin{aligned}
\sum_{j=0}^{\nu-1}\ell^{\nu-j}d(x_j,x_{j+1})&=\sum_{j=0}^{\nu-\nu_1}\ell^{\nu-j}d(x_j,x_{j+1})+\sum_{j=\nu-\nu_1+1}^{\nu-1}\ell^{\nu-j}d(x_j,x_{j+1})\\
&\le M\sum_{j=0}^{\nu-\nu_1}\ell^{\nu-j}+\frac{\eps(1-\ell)}{2}\sum_{j=\nu-\nu_1+1}^{\nu-1}\ell^{\nu-j}\\
&=M\sum_{j=\nu_1}^\nu\ell^j+\frac{\eps(1-\ell)}{2}\sum_{j=1}^{\nu_1-1}\ell^j\\
&\le\frac{M}{1-\ell}\ell^{\nu_1}+\frac{\eps(1-\ell)}{2}\frac{1-\ell^{\nu_1}}{1-\ell}\le\eps\;.
\end{aligned}
\]
In this way we have proved that $L_\nu\to x_\infty$ pointwise --- and hence uniformly on compact subsets by (ii) --- as claimed.

Conversely, assume that $L_\nu\to x_\infty$ and, by contradiction, that there exist $r>0$ and a subsequence $\{x_{\nu_k}\}$ such that $d(x_{\nu_k},x_\infty)> r$
for all~$k$. Fix $x\in X$. Since $L_\nu(x)\to\infty$, we know that 
\[
d\bigl(L_\nu(x),x_\infty\bigr)<\frac{1-\ell}{1+\ell}r
\]
for $\nu$ large enough. Then for $k$ large enough we have
\[
\begin{aligned}
d(x_{\nu_k},x_\infty)-\frac{1-\ell}{1+\ell}r&\le
d(x_{\nu_k},x_\infty)-d\bigl(L_{\nu_k}(x),x_\infty)\\
&\le d\bigl(L_{\nu_k}(x),x_{\nu_k}\bigr)\\
&\le \ell d\bigl(L_{\nu_k-1}(x),x_{\nu_k}\bigr)\le\ell d\bigl(L_{\nu_k-1}(x),x_\infty\bigr)+\ell d(x_{\nu_k},x_\infty)\\
&\le \frac{1-\ell}{1+\ell}\ell r+\ell d(x_{\nu_k},x_\infty)\;,
\end{aligned}
\]
and then $d(x_{\nu_k},x_\infty)\le r$, contradiction.
\end{proof}

\begin{Remark}
\label{rem:I.3.topc}
If $(X,d)$ is complete the Banach fixed point theorem implies that every $f\in\Hol(X,X)$ satisfying \eqref{eq:I.3.unifLip} has a fixed point in~$X$, and thus the hypothesis in (iii) is automatically satisfied.
\end{Remark}

As recalled in Section~\ref{sec:1}, if $\Omega\subset X$ is a Bloch domain then there exists $0<\ell<1$ such that
\[
\omega_X\bigl(f(z),f(w)\bigr)\le \ell \omega_X(z,w)
\]
for every $z$,~$w\in X$ and every $f\in\Hol(X,\Omega)$. Therefore Theorem~\ref{th:1.uno} is a consequence of Theorem~\ref{th:I.3.toprand}, as shown in the following result, where for the sake of completeness we report also what has been proved in \cite{BeardonCarneMindaNg2004} for right iterated function systems:

%
%
%

\begin{Corollary}
\label{th:I.3.randituno}
Let $\Omega\subset X$ be a Bloch domain in a hyperbolic Riemann surface~$X$, and let $\{f_\nu\}$ be a sequence of holomorphic self-maps of~$X$ 
such that  $f_\nu(X)\subseteq\Omega$ for all~$\nu\in\N$, that is $\{f_\nu\}\subset\Hol(X,\Omega)$. Then:
\begin{enumerate}
\item[\rm(i)] all limit points of the left and right iterated function systems, $\{L_\nu\}$ and $\{R_\nu\}$, generated by~$\{f_\nu\}$ are constant;
\item[\rm(ii)] let $z_\nu\in\Omega$ be the unique fixed point of~$f_\nu$. Then $\{L_\nu\}$ converges to a constant $z_\infty\in X$ if and only if $z_\nu\to z_\infty$;
\item[\rm(iii)] if there is $z_0\in X$ such that the set $\{f_\nu(z_0)\}$ is relatively compact in~$X$ then $\{R_\nu\}$ converges to a constant function. 
 \end{enumerate}
\end{Corollary}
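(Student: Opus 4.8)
\textbf{Proof plan for Corollary~\ref{th:I.3.randituno}.}
The plan is to deduce everything from Proposition~\ref{th:I.3.Bloch} and Theorem~\ref{th:I.3.toprand}, plus the contraction property applied directly to the right iterated function systems. By Proposition~\ref{th:I.3.Bloch} there is $0<\ell<1$ with $\omega_X(f(z),f(w))\le\ell\,\omega_X(z,w)$ for all $z,w\in X$ and all $f\in\Hol(X,\Omega)$. Since each $f_\nu\in\Hol(X,\Omega)\subset\Hol(X,X)$, the hypothesis \eqref{eq:I.3.unifLip} of Theorem~\ref{th:I.3.toprand} holds on the metric space $(X,\omega_X)$, which is complete. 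By Remark~\ref{rem:I.3.topc} each $f_\nu$ has a unique fixed point $z_\nu\in X$; and since $f_\nu(X)\subseteq\Omega$ in fact $z_\nu=f_\nu(z_\nu)\in\Omega$. Parts (i) and (ii) for $\{L_\nu\}$ are then immediate from parts (i) and (iii) of Theorem~\ref{th:I.3.toprand}: every limit point of $\{L_\nu\}$ is constant, and $\{L_\nu\}$ converges to a constant $z_\infty$ if and only if $z_\nu\to z_\infty$.

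For the right iterated function systems $\{R_\nu\}$ the argument is different because $R_{\nu+1}=R_\nu\circ f_{\nu+1}$ post-composes the old map with contractions on the \emph{inside}, so the telescoping works in the opposite direction. First, for the statement in (i) about $\{R_\nu\}$: if a subsequence $R_{\nu_k}\to g\in C^0(X,X)$ pointwise, then for any $x,y\in X$ we have $\omega_X(R_{\nu_k}(x),R_{\nu_k}(y))=\omega_X(f_0\circ\cdots\circ f_{\nu_k}(x),f_0\circ\cdots\circ f_{\nu_k}(y))\le\ell^{\,\nu_k+1}\omega_X(x,y)\to0$, so $g$ is constant; hence all limit points of $\{R_\nu\}$ are constant, proving (i). For (iii), I would show that $\{R_\nu(z_0)\}$ is a Cauchy sequence in $(X,\omega_X)$. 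Indeed, for $\mu>\nu$,
\[
\omega_X\bigl(R_\nu(z_0),R_\mu(z_0)\bigr)=\omega_X\bigl(f_0\circ\cdots\circ f_\nu(z_0),\,f_0\circ\cdots\circ f_\nu(g_{\nu,\mu}(z_0))\bigr)\le\ell^{\,\nu+1}\,\omega_X\bigl(z_0,g_{\nu,\mu}(z_0)\bigr),
\]
where $g_{\nu,\mu}=f_{\nu+1}\circ\cdots\circ f_\mu$. The point $g_{\nu,\mu}(z_0)$ lies in $f_{\nu+1}(X)\subseteq\Omega$, but more usefully one uses $\omega_X(z_0,g_{\nu,\mu}(z_0))\le\omega_X(z_0,f_{\nu+1}(z_0))+\omega_X(f_{\nu+1}(z_0),g_{\nu,\mu}(z_0))$ and iterates: writing $h_j=f_{\nu+1}\circ\cdots\circ f_j$ one telescopes $\omega_X(z_0,g_{\nu,\mu}(z_0))\le\sum_{j}\ell^{\,j-\nu-1}\omega_X(f_j(z_0),\cdots)$, which stays bounded by $C/(1-\ell)$ once we know $\sup_\nu\omega_X(z_0,f_\nu(z_0))<+\infty$. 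This last bound is exactly where the hypothesis of (iii) is used: relative compactness of $\{f_\nu(z_0)\}$ in $X$ gives a compact set $K\ni z_0$ containing all $f_\nu(z_0)$, hence $\sup_\nu\omega_X(z_0,f_\nu(z_0))\le\operatorname{diam}_{\omega_X}(K)<+\infty$. Therefore $\omega_X(R_\nu(z_0),R_\mu(z_0))\le\ell^{\,\nu+1}\cdot\frac{C}{1-\ell}\to0$, so $\{R_\nu(z_0)\}$ is Cauchy, hence convergent by completeness of $\omega_X$. By Vitali's theorem (Theorem~\ref{th:Vitali}), convergence at the single point $z_0$ upgrades to convergence of $\{R_\nu\}$ uniformly on compact subsets to a holomorphic limit, which is constant by part (i); this proves (iii).

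The main obstacle is the bookkeeping in (iii): one must keep track of which factors get contracted and make sure the geometric factor $\ell^{\nu+1}$ is pulled all the way out front while the residual term $\omega_X(z_0,g_{\nu,\mu}(z_0))$ is controlled uniformly in $\mu$. The clean way to organise this is to prove the auxiliary estimate $\omega_X\bigl(R_\nu(z_0),R_{\nu+1}(z_0)\bigr)=\omega_X\bigl(R_\nu(z_0),R_\nu(f_{\nu+1}(z_0))\bigr)\le\ell^{\,\nu+1}\omega_X(z_0,f_{\nu+1}(z_0))\le\ell^{\,\nu+1}D$, where $D=\sup_\nu\omega_X(z_0,f_\nu(z_0))<+\infty$ by the hypothesis, and then sum the geometric series $\sum_\nu\ell^{\,\nu+1}D<+\infty$ to conclude that $\{R_\nu(z_0)\}$ is Cauchy. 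This avoids the double-index telescoping entirely. Everything else is a direct citation of Theorem~\ref{th:I.3.toprand}, Remark~\ref{rem:I.3.topc}, and Theorem~\ref{th:Vitali}.
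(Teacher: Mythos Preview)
Your treatment of the left iterated function system is identical to the paper's: apply Proposition~\ref{th:I.3.Bloch} to get the uniform contraction constant~$\ell$, note completeness of~$\omega_X$, invoke Remark~\ref{rem:I.3.topc} for the fixed points, and read off parts (i) and (iii) of Theorem~\ref{th:I.3.toprand}.

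For the right iterated function system the paper does not argue at all: it simply cites \cite{BeardonCarneMindaNg2004}. Your direct argument is therefore more self-contained, and the core idea---the one-step estimate $\omega_X\bigl(R_\nu(z_0),R_{\nu+1}(z_0)\bigr)=\omega_X\bigl(R_\nu(z_0),R_\nu(f_{\nu+1}(z_0))\bigr)\le\ell^{\nu+1}D$ with $D=\sup_\nu\omega_X\bigl(z_0,f_\nu(z_0)\bigr)<+\infty$, summed as a geometric series---is exactly the standard proof and is correct.

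There is, however, one slip in the last step. Vitali's theorem as stated in Theorem~\ref{th:Vitali} requires convergence on a set with an accumulation point; convergence at the single point~$z_0$ does not suffice to invoke it. The fix is immediate and does not need Vitali at all: once $R_\nu(z_0)\to w_0$, the contraction estimate
\[
\omega_X\bigl(R_\nu(z),R_\nu(z_0)\bigr)\le\ell^{\,\nu+1}\omega_X(z,z_0)\to 0
\]
forces $R_\nu(z)\to w_0$ for every $z\in X$, and this convergence is clearly uniform on Poincar\'e balls, hence on compact sets. (Alternatively, the same Cauchy argument works verbatim at every point~$z$, since $\omega_X\bigl(z,f_{\nu+1}(z)\bigr)\le (1+\ell)\,\omega_X(z,z_0)+D$, and then Vitali is legitimately applicable.) With this correction your proof of (iii) is complete.
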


\begin{proof}
The statements for $\{R_\nu\}$ are in \cite{BeardonCarneMindaNg2004}. For $\{L_\nu\}$,
since the Poincar\'e distance of~$X$ is complete, by Proposition~\ref{th:I.3.Bloch} and Remark~\ref{rem:I.3.topc} we can apply Theorem~\ref{th:I.3.toprand}, and we are done.
\end{proof}

\section{Random iteration of small perturbations}
\label{sec:3}

In this section we shall discuss the behaviour of iterated function systems generated by functions close enough to a given self-map~$F$; in particular we would like to understand whether the dynamics of the iterated function systems mimics the dynamics of the sequence of iterates of~$F$. 

Recalling Theorem~\ref{th:I.3.hypRiemDW}, we see that we have three cases to consider: when $F$ has an attracting fixed point, when $F$ is a periodic or pseudoperiodic automorphism and when the sequence $\{F^k\}$ is compactly divergent. 

In the first case we have a fairly complete result. 

\begin{Theorem}
\label{th:I.3.convWDin}
Let $X$ be a hyperbolic Riemann surface and
let $F\in\Hol(X,X)$ be with an attracting fixed point $z_0\in X$. Then:
\begin{enumerate}
\item[\rm(i)] there exists a neighbourhood $\mathcal{U}$ of~$F$ in~$\Hol(X,X)$ such that every right iterated function system generated by $\{f_\nu\}\subset\mathcal{U}$ converges to a constant in~$X$;
\item[\rm(ii)]if $\{f_\nu\}\subset\Hol(X,X)$ is a sequence converging to~$F$ then the left iterated function system generated by $\{f_\nu\}$ converges to~$z_0$.
\end{enumerate}
\end{Theorem}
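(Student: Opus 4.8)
\textbf{Proof proposal for Theorem~\ref{th:I.3.convWDin}.}

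The plan is to use the attracting fixed point $z_0$ of $F$ to build an invariant Poincar\'e ball on which all nearby maps are uniform contractions, and then to reduce both statements to the abstract topological Theorem~\ref{th:I.3.toprand}. The starting observation is that, since $|F'(z_0)|<1$, the Schwarz lemma applied on a small coordinate disk around $z_0$ shows that $F$ strictly contracts the Poincar\'e distance near $z_0$: more precisely, for each sufficiently small $R>0$ one has $F\bigl(\overline{B_X(z_0,R)}\bigr)\subset B_X(z_0,\lambda R)$ for some $\lambda<1$. (This is essentially the standard local linearisation picture; since $\overline{B_X(z_0,R)}$ is compact, one can also argue directly that $\sup_{w\in\overline{B_X(z_0,R)}}\omega_X\bigl(z_0,F(w)\bigr)/R\to|F'(z_0)|<1$ as $R\to0$.) Fix such an $R$ and $\lambda$, and set $\ell=(1+\lambda)/2<1$.

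Next I would pass to a neighbourhood $\mathcal{U}$ of $F$ in $\Hol(X,X)$ small enough that every $f\in\mathcal{U}$ still maps $\overline{B_X(z_0,R)}$ into $B_X(z_0,\ell R)$; this is possible because evaluation and the compact-open topology make the quantity $\sup_{w\in\overline{B_X(z_0,R)}}\omega_X\bigl(z_0,f(w)\bigr)$ continuous in $f$, and it is $<\ell R$ at $f=F$. By the Banach fixed point theorem every $f\in\mathcal{U}$ has then a unique fixed point $x_f\in B_X(z_0,\ell R)\subset\overline{B_X(z_0,R)}$. For part~(i): given $\{f_\nu\}\subset\mathcal{U}$, the ball $B=\overline{B_X(z_0,R)}$ is forward-invariant for every $f_\nu$, hence also for every right composition $R_\nu=f_0\circ\cdots\circ f_\nu$; restricting to $B$ we are in the setting of a complete metric space with a sequence of maps each contracting $\omega_X$ by the factor $\ell$ (here I use the Schwarz--Pick Theorem~\ref{th:SPhyp} off $B$ combined with the stronger contraction into $B$ — in fact each $f_\nu$ restricted to $B$ is an $\ell$-contraction of $(B,\omega_X)$ because $f_\nu(B)\subset B_X(z_0,\ell R)$ forces the images to have diameter at most $2\ell R\le\ell\cdot\mathrm{diam}(B)$; to get a genuine pointwise $\ell$-Lipschitz bound one instead notes $\omega_X(f_\nu(z),f_\nu(w))\le\omega_X(z,w)$ always, and uses the invariant-ball contraction to drive the classical right-IFS argument of Beardon--Carne--Minda--Ng). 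The cleanest route is: since $f_\nu(B)\subset B':=\overline{B_X(z_0,\ell R)}$ for all $\nu$ and $B'$ is a relatively compact subdomain of $X$, hence a Bloch domain, Corollary~\ref{th:I.3.randituno}(iii) (applied with $X$ replaced by $B$ and $\Omega=B'$, or just the known right-IFS statement) gives that $\{R_\nu\}$ converges to a constant in $B\subset X$.

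For part~(ii): if $f_\nu\to F$ then eventually $f_\nu\in\mathcal{U}$, so discarding finitely many terms we may assume $f_\nu\in\mathcal{U}$ for all $\nu$; again $B=\overline{B_X(z_0,R)}$ is invariant under every $f_\nu$, and on the complete metric space $(B,\omega_X)$ each $f_\nu$ satisfies the uniform Lipschitz bound \eqref{eq:I.3.unifLip} with constant $\ell$ (using Schwarz--Pick for the Lipschitz constant and the invariance to stay in $B$; the honest $\ell$-Lipschitz estimate on all of $B$ follows because $f_\nu$ factors through the inclusion $B_X(z_0,\ell R)\hookrightarrow B$, which contracts $\omega_X$ restricted to $B$ by a factor $\le\ell$ by Schwarz--Pick for the larger-disk vs.\ smaller-disk Poincar\'e metrics — this is the one point needing a short explicit check). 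Then Theorem~\ref{th:I.3.toprand}(iii) applies: the fixed points $x_\nu=x_{f_\nu}$ of $f_\nu$ satisfy $x_\nu\to z_0$ because $x_\nu=f_\nu(x_\nu)$, $x_\nu\in B$, and $f_\nu\to F$ uniformly on the compact set $B$ forces any limit point $x_\infty$ of $\{x_\nu\}$ to satisfy $F(x_\infty)=x_\infty$, whence $x_\infty=z_0$ by uniqueness of the fixed point of $F$ in $B$. Therefore $\{L_\nu\}$ converges uniformly on compact subsets of $B$ to $z_0$; finally, since $B=\overline{B_X(z_0,R)}$ is forward-invariant, for arbitrary compact $K\subset X$ one has $L_\nu(K)\subset B$ for $\nu$ large (indeed $L_\nu=f_\nu\circ L_{\nu-1}$ and $f_\nu(B)\subset B$, while $L_0(K)=f_0(K)$ is some fixed compact set shrunk into $B$ after finitely many further compositions by the contraction), so convergence to $z_0$ holds uniformly on all of $X$'s compacta.

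The main obstacle I anticipate is the very first step: producing the invariant Poincar\'e ball $\overline{B_X(z_0,R)}$ together with a contraction factor $\lambda<1$, and doing so in a way that is robust under small perturbations of $F$. On a general hyperbolic Riemann surface one must be careful that the Poincar\'e metric near $z_0$ is only asymptotically comparable to a flat metric, so the clean inequality $F(\overline{B_X(z_0,R)})\subset B_X(z_0,\lambda R)$ requires an $R$ chosen small relative to how fast the Poincar\'e density varies — this is where I would spend the real work (likely via lifting to the universal cover $\D$ and using the explicit form of the Poincar\'e metric and Schwarz--Pick there). Everything after that is a routine packaging of Theorem~\ref{th:I.3.toprand} and Corollary~\ref{th:I.3.randituno}.
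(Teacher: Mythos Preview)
Your overall architecture is right, and part~(i) matches the paper's argument essentially verbatim (pass to a ball around $z_0$, observe that the image ball is a Bloch subdomain, invoke Corollary~\ref{th:I.3.randituno}(iii), then Vitali). But the step you flag as the ``main obstacle'' is one the paper sidesteps entirely, and your way of handling it creates ripples later. The paper does not use $|F'(z_0)|<1$ and a small $R$; instead it observes that $F$ cannot be a self-covering of~$X$ (Corollary~\ref{th:I.3.selfcov}), so the equality case in Schwarz--Pick is excluded and one gets a \emph{uniform} Lipschitz constant $k<1$ for $F$ on any fixed compact ball $\overline{B_X(z_0,r)}$, for any~$r$. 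This immediately gives both the invariant ball and the pointwise contraction $\omega_X(F(z),F(w))\le k\,\omega_X(z,w)$ on~$\overline D$, with no local analysis, no lifting to~$\D$, and no smallness condition on~$r$.

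For part~(ii) you take a genuinely different route from the paper: you want to feed the $f_\nu$'s into Theorem~\ref{th:I.3.toprand}(iii) by showing that the fixed points $x_\nu$ converge to~$z_0$, whereas the paper does a direct telescoping estimate $\omega_X\bigl(L_\nu(z),F^{\nu+1}(z)\bigr)\le\sum_{j=0}^\nu k^j\sup_{w\in\overline D}\omega_X\bigl(f_{\nu-j}(w),F(w)\bigr)$ and uses $F^\nu\to z_0$. Your route is pleasant, but your justification of the uniform $\ell$-Lipschitz bound is wrong as written: the inclusion $B_X(z_0,\ell R)\hookrightarrow B_X(z_0,R)$ does \emph{not} contract $\omega_X$ by a factor~$\ell$; $\omega_X$ is the ambient metric and is unchanged by inclusion. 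What does work is to take the \emph{open} ball $D=B_X(z_0,R)$ as its own hyperbolic surface and use Proposition~\ref{th:I.3.Bloch} to get an $\ell'$-contraction with respect to~$\omega_D$, then run Theorem~\ref{th:I.3.toprand}(iii) in $(D,\omega_D)$. The fixed-point argument $x_\nu\to z_0$ is fine as you wrote it.

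Finally, your extension from $B$ to all of~$X$ has a gap: you claim $L_\nu(K)\subset B$ eventually for arbitrary compact~$K$, because the contraction ``shrinks $f_0(K)$ into $B$''. But the contraction holds only inside~$B$; if $f_0(K)\not\subset B$ you have no mechanism pulling it in. The paper simply applies Vitali (Theorem~\ref{th:Vitali}): once $L_\nu\to z_0$ on the open set~$D$, Vitali gives convergence on all of~$X$. That closes the argument cleanly.
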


\begin{proof}
Fix $r>0$ and let $D=B_X(z_0,r)$. Since $\overline{D}$ is compact and $F$ is not a self-covering of~$X$ (see Corollary~\ref{th:I.3.selfcov}), by the general Schwarz-Pick lemma Theorem~\ref{th:SPhyp} there is $0<k<1$ such that
$\omega_X\bigl(F(z),F(w)\bigr)\le k\omega_X(z,w)$ for all $z$,~$w\in\overline{D}$. In particular, since $F(z_0)=z_0$, we have $F(\overline{D})\subseteq \overline{B_X(z_0,kr)}\subset D$. 
Choose a real number~$t$ such that $kr<t<r$ and put
\[
\mathcal{U}=\{h\in\Hol(X,X)\mid h(\overline{D})\subset B_X(z_0,t)\}\;;
\]
clearly $\cal U$ is a neighbourhood of~$F$. Notice that $D$ is a hyperbolic Riemann surface, and $B_X(z_0,t)$ is a Bloch domain, because it is relatively compact in~$D$.
If $\{f_\nu\}\subset\cal U$ then, by Corollary~\ref{th:I.3.randituno}.(iii), the corresponding right iterated function system converges in~$D$---and hence, by Vitali theorem (Theorem~\ref{th:Vitali}), in~$X$---to a constant contained in $\overline{B_X(z_0,t)}$, and thus (i) is proved.

For (ii), since $f_\nu\to F$ we have $f_\nu\in\cal U$ for all $\nu$ large enough; truncating $L_\nu$ by finitely many terms on the right and relabelling we can assume
without loss of generality that $f_\nu(\overline{D})\subset D$ for all $\nu\in\N$. Choose now $z\in\overline{D}$. We have $F^\nu(z)$,~$L_\nu(z)\in\overline{D}$ for all~$\nu\in\N$; hence
\[
\begin{aligned}
\omega_X\bigl(L_\nu(z),F^\nu(z)\bigr)&\le \omega_X\bigl(L_\nu(z),F\bigr(L_{\nu-1}(z)\bigr)\bigr)+\omega_X\bigl(F\bigr(L_{\nu-1}(z)\bigr),F^\nu(z)\bigr)\\
&\le\sup_{w\in\overline{D}}\omega_X\bigl(f_\nu(w),F(w)\bigr)+k\omega_X\bigl(L_{\nu-1}(z),F^{\nu-1}(z)\bigr)\;.
\end{aligned}
\]
Repeating this argument we get by induction 
\[
\omega_X\bigl(L_\nu(z),F^\nu(z)\bigr)\le\sum_{j=0}^\nu k^j\sup_{w\in\overline{D}}\omega_X\bigl(f_{\nu-j}(w),F(w)\bigr)\;.
\]
Fix $\eps>0$. Since $0<k<1$ and, by assumption, $f_\nu\to F$ uniformly on compact subsets, we can find $\nu_0$ large enough so that $k^{j}<\frac{(1-k)\eps}{4r}$ and $\sup\limits_{w\in\overline{D}}\omega_X\bigl(f_j(w),F(w)\bigr)<\frac{(1-k)\eps}{2}$ as soon as $j\ge\nu_0/2$. Therefore if $\nu\ge\nu_0$ we have
\[
\begin{aligned}
\omega_X\bigl(L_\nu(z),F^\nu(z)\bigr)&\le\sum_{j=0}^{\nu/2} k^j\sup_{w\in\overline{D}}\omega_X\bigl(f_{\nu-j}(w),F(w)\bigr)\\
&\quad+\sum_{j=\nu/2}^{\nu} k^j\sup_{w\in\overline{D}}\omega_X\bigl(f_{\nu-j}(w),F(w)\bigr)\\
&<\left[ \frac{(1-k)\eps}{2}+2rk^{\nu/2}\right]\sum_{j=0}^{\nu/2} k^j<2\frac{(1-k)\eps}{2}\frac{1}{1-k}=\eps\;,
\end{aligned}
\]
and so $\omega_X\bigl(L_\nu(z),F^\nu(z)\bigr)\to 0$ uniformly on~$\overline{D}$ as $\nu\to+\infty$. Since Theorem~\ref{th:I.3.hypRiemDW} implies $F^\nu\to z_0$, it follows that $L_\nu\to z_0$ on~$\overline{D}$ and hence, again by Vitali theorem, on~$X$, as claimed.
\end{proof}

\begin{Remark}
\label{rem:I.3.convWDin}
In Theorem~\ref{th:I.3.convWDin}.(i) by changing~$f_0$ one can change the constant limit of the right iterated function system~$\{R_\nu\}$. 
\end{Remark}

The next example, taken from~\cite{ChristodoulouShort2019}, shows that in Theorem~\ref{th:I.3.convWDin}.(ii) we cannot replace the hypothesis $f_\nu\to F$ by the hypothesis $\{f_\nu\}\subset\mathcal{U}$ 
for any neighbourhood $\mathcal{U}$ of~$F$.

\begin{Example}
\label{ex:I.3.convWDin}
Let $F\in\Hol(\D,\D)$ be given by $F(z)=\frac{1}{2}z$, and let $\mathcal{U}\subset\Hol(\D,\D)$ be a neighbourhood of~$F$. Given $\delta>0$ and $\theta\in\R$, put $f_{\delta,\theta}(z)=\frac{1}{2}z+\delta e^{i\theta}$. Clearly $f_{\delta,\theta}\in\Hol(\D,\D)$ as soon as $\delta<1/2$. 

We claim that we can choose $\delta$ small enough so that $f_{\delta,\theta}\in\mathcal{U}$ for all~$\theta\in\R$. Indeed, fix a compact subset $K\subset\D$ and let $V\subset\D$ be an open neighbourhood of $F(K)$, so that $F\in\mathcal{U}(K,V)=\{h\in\Hol(\D,\D)\mid h(K)\subset V\}$.
Since $F(K)\cap\de V=\void$, there is a $\delta_0>0$ such that $d\bigl(w,F(K)\bigr)<\delta_0$ implies $w\in V$, where $d$ denotes the Euclidean distance. As a consequence,
if $\delta\le\delta_0$ we have $f_{\delta,\theta}(K)\subset V$ for all~$\theta\in\R$. Since $\mathcal{U}$ contains a finite intersection of sets of the form $\mathcal{U}(K,V)$ the claim follows.

Given $\{\theta_\nu\}\subset\R$ it is easy to check by induction that the left iterated function system $\{L_\nu\}$ generated by $\{f_{\delta,\theta_\nu}\}$ is given by
\[
L_\nu(z)=\frac{1}{2^{\nu+1}}z+\delta\sum_{j=0}^\nu\frac{1}{2^{j}}e^{i\theta_{\nu-j}}\;.
\]
For instance, taking $e^{i\theta_j}=(-1)^j$ we get
\[
L_\nu(0)=\delta(-1)^\nu\sum_{j=0}^\nu\left(-\frac{1}{2}\right)^j
\]
that does not converge when $\nu\to+\infty$, and thus $\{L_\nu\}$ cannot be convergent. 
\end{Example}

The next case is when $\{F^\nu\}$ is compactly divergent. If $f_\nu$ converges to~$F$ fast enough then the dynamics of the left iterated function system generated by $\{f_\nu\}$ is dictated by the dynamics
of~$\{F^\nu\}$:

\begin{Theorem}
\label{th:I.3.convWDbound}
Let $X$ be a hyperbolic Riemann surface, and let $F\in\Hol(X,X)$ be such that the sequence of iterates $\{F^\nu\}$ is compactly divergent. Then we can find a sequence 
of neighbourhoods $\mathcal{U}_\nu\subset\Hol(X,X)$ of~$F$ such that if $f_\nu\in\mathcal{U}_\nu$ for all~$\nu\in\N$ then the left iterated function system $\{L_\nu\}$ generated by~$\{f_\nu\}$ is compactly divergent. Furthermore, if $X\subset\widehat{X}$ is a hyperbolic domain in a compact Riemann surface~$\widehat{X}$ and $\{F^\nu\}$ converges to a point $\tau\in\de X$ then 
$\{L_\nu\}$ converges to~$\tau$.
\end{Theorem}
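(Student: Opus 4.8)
The plan is to pick the neighbourhoods $\mathcal U_\nu$ tight enough so that, for a fixed base point $z_0\in X$, the orbit $\{L_\nu(z_0)\}$ stays within a fixed finite Poincar\'e distance of the orbit $\{F^{\nu+1}(z_0)\}$. Compact divergence of $\{F^\nu\}$ then transfers to $\{L_\nu\}$, and in the bounded-domain case the limit point $\tau$ is carried over by Proposition~\ref{th:bP}.

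Concretely, I would fix $z_0\in X$, write $p_\nu:=F^\nu(z_0)$ with $F^0:=\id_X$, set $K_\nu:=\overline{B_X(p_\nu,1)}$ (compact since $\omega_X$ is complete), and define
\[
\mathcal U_\nu:=\Bigl\{h\in\Hol(X,X)\ \Big|\ \sup_{w\in K_\nu}\omega_X\bigl(h(w),F(w)\bigr)<2^{-\nu-1}\Bigr\}\;,
\]
which is a neighbourhood of $F$ because the compact-open topology on $\Hol(X,X)$ coincides with the topology of uniform convergence on compact sets. Assuming $f_\nu\in\mathcal U_\nu$ for all $\nu$, I would set $a_\nu:=\omega_X\bigl(L_\nu(z_0),p_{\nu+1}\bigr)$ and $\eps_\nu:=\omega_X\bigl(f_\nu(L_{\nu-1}(z_0)),F(L_{\nu-1}(z_0))\bigr)$, with the convention $L_{-1}:=\id_X$, and combine the triangle inequality with the Schwarz--Pick lemma (Theorem~\ref{th:SPhyp}) for $F$ to obtain
\[
a_\nu=\omega_X\bigl(f_\nu(L_{\nu-1}(z_0)),F(p_\nu)\bigr)\le\eps_\nu+\omega_X\bigl(F(L_{\nu-1}(z_0)),F(p_\nu)\bigr)\le\eps_\nu+a_{\nu-1}\;.
\]
The key claim is that $a_\nu<1$ for every $\nu$, to be proved by induction: if $a_{\nu-1}<1$ (which holds for $\nu=0$, as $a_{-1}=0$), then $\omega_X\bigl(L_{\nu-1}(z_0),p_\nu\bigr)=a_{\nu-1}<1$, so $L_{\nu-1}(z_0)\in K_\nu$, and thus $f_\nu\in\mathcal U_\nu$ forces $\eps_\nu<2^{-\nu-1}$, whence $a_\nu\le\sum_{j=0}^\nu\eps_j<\sum_{j=0}^{\infty}2^{-j-1}=1$; this closes the induction.

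Granting $a_\nu<1$, compact divergence of $\{L_\nu\}$ is routine. For a compact $K\subseteq X$, put $M_K:=\sup_{z\in K}\omega_X(z,z_0)<+\infty$; Schwarz--Pick for $L_\nu$ gives $\omega_X\bigl(L_\nu(z),L_\nu(z_0)\bigr)\le M_K$, hence $\omega_X\bigl(L_\nu(z),p_{\nu+1}\bigr)\le M_K+1$ for all $z\in K$. Since $\{F^\nu\}$ is compactly divergent we have $\omega_X(p_\nu,z_0)\to+\infty$ (apply the definition to $K=\{z_0\}$ and the closed balls centred at $z_0$), so for any compact $L\subseteq X$ one has $\omega_X(p_{\nu+1},z_0)>M_K+1+\sup_{w\in L}\omega_X(w,z_0)$ eventually, and then $B_X(p_{\nu+1},M_K+1)\cap L=\void$; therefore $L_\nu(K)\cap L=\void$ for $\nu$ large, which is exactly the compact divergence of $\{L_\nu\}$. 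For the last assertion, assume $X\subseteq\widehat X$ is a hyperbolic domain in a compact Riemann surface and $F^\nu\to\tau\in\de X$, so that $p_{\nu+1}\to\tau$ in $\widehat X$. Were $\{L_\nu\}$ not to converge to $\tau$ uniformly on some compact $K\subseteq X$, there would be $\delta>0$, a subsequence $\{\nu_k\}$ and points $z_k\in K$ with $L_{\nu_k}(z_k)$ staying at distance at least $\delta$ from $\tau$; but $\omega_X\bigl(L_{\nu_k}(z_k),p_{\nu_k+1}\bigr)\le M_K+1$ and $p_{\nu_k+1}\to\tau$, so Proposition~\ref{th:bP} applied to the sequences $(p_{\nu_k+1})_k$ and $(L_{\nu_k}(z_k))_k$ yields $L_{\nu_k}(z_k)\to\tau$, a contradiction. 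Hence $L_\nu\to\tau$, as claimed.

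The step I expect to be most delicate is the choice and inductive use of the $\mathcal U_\nu$: the compact set $K_\nu$ on which proximity to $F$ is imposed must be fixed in advance, depending only on $F$ and $z_0$, while the point it is designed to control, $L_{\nu-1}(z_0)$, depends on all of $f_0,\dots,f_{\nu-1}$; the summable tolerances $2^{-\nu-1}$ are precisely what makes the estimate $a_{\nu-1}<1$ self-propagating, so that $L_{\nu-1}(z_0)$ does land in $K_\nu$ at each stage. Everything else --- Schwarz--Pick, completeness of $\omega_X$, and Proposition~\ref{th:bP} --- enters only in routine ways.
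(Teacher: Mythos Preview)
Your proof is correct and follows essentially the same strategy as the paper's: force $\omega_X\bigl(L_\nu(z_0),F^{\nu+1}(z_0)\bigr)$ to stay below~$1$ by a telescoping estimate with summable tolerances $2^{-\nu-1}$, then transfer compact divergence via Schwarz--Pick and conclude the boundary case with Proposition~\ref{th:bP}. The only cosmetic differences are that the paper takes the larger compacts $D_\nu=\overline{B_X\bigl(z_0,1+\omega_X(F^\nu(z_0),z_0)\bigr)}$ and builds $\mathcal U_\nu$ explicitly as a finite intersection of compact-open subbasic sets, whereas you use the smaller $K_\nu=\overline{B_X(p_\nu,1)}$ and invoke directly that uniform-on-compacts $\omega_X$-closeness defines compact-open neighbourhoods; both choices work, and yours is slightly more economical. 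One small point of presentation: your induction is really strong induction (you need $a_{j-1}<1$ for \emph{all} $j\le\nu$ to bound each $\eps_j$ in the sum $\sum_{j=0}^\nu\eps_j$), so state the hypothesis that way.
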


\begin{proof}
Fix a reference point~$z_0\in X$. For $\nu\in\N$ set 
\[
D_\nu=\overline{B_X\bigl(z_0,1+\omega_X(F^{\nu}(z_0),z_0)\bigr)}\;.
\] 
Since $\omega_X\bigl(F^{\nu}(z_0),z_0\bigr)\to+\infty$ as $\nu\to+\infty$ we have $X=\bigcup\limits_{\nu\in\N} D_\nu$.
Given $\nu\in\N$, choose $z_1,\ldots,z_r\in D_\nu$ such that $F(D_\nu)\subseteq\bigcup\limits_{j=1}^r B_X\bigl(F(z_j),(3\cdot 2^{\nu+1})^{-1}\bigr)$.
Put $B_j=B_X\bigl(F(z_j),(3\cdot 2^{\nu+1})^{-1}\bigr)$, $K_j=D_\nu\cap F^{-1}(\overline{B_j})$ and $\widetilde{B_j}=B_X\bigl(F(z_j),2^{-\nu-2}\bigr)$; in particular, $D_\nu=\bigcup\limits_{j=1}^r K_j$ and if $z\in K_j$ 
then $\omega_X\bigl(F(z),F(z_j)\bigr)\le(3\cdot2^{\nu+1})^{-1}$. Finally, put
\[
\mathcal{U}_\nu=\bigcap_{j=1}^r \bigl\{h\in\Hol(X,X)\bigm| h(K_j)\subset\widetilde{B_j}\bigr\}\;;
\]
this is a neighbourhood of~$F$ because $F(K_j)\subseteq\overline{B_j}\subset\widetilde{B_j}$ for all $j=1,\ldots,r$ by construction. 

Take $h\in\mathcal{U}_\nu$. If $z\in D_\nu$ we must have $z\in K_j$ for some $j=1,\ldots,r$; then
\begin{equation}
\begin{aligned}
\omega_X\bigl(h(z),F(z)\bigr)&\le\omega_X\bigl(h(z),F(z_j)\bigr)+\omega_X\bigl(F(z_j),F(z)\bigr)\\
&<\frac{1}{2^{\nu+2}}+\frac{1}{3\cdot 2^{\nu+1}}<\frac{1}{2^{\nu+1}}\;.
\end{aligned}
\label{eq:I.3.convDW}
\end{equation}
Now take $\{f_\nu\}\subset\Hol(X,X)$ with $f_\nu\in\mathcal{U}_\nu$ for all~$\nu\in\N$; notice that \eqref{eq:I.3.convDW} implies that $f_\nu\to F$. We will prove,
by induction, that 
\[
\omega_X\bigl(L_\nu(z_0),F^{\nu+1}(z_0)\bigr)<1-\frac{1}{2^{\nu+1}}
\] 
for all $\nu\in\N$. For $\nu=0$ it follows immediately from \eqref{eq:I.3.convDW}. Assume it holds for $\nu-1$; then
\[
\begin{aligned}
\omega_X\bigl(L_\nu(z_0),&\,F^{\nu+1}(z_0)\bigr)\\
&\le\omega_X\bigl(L_\nu(z_0),F\bigl(L_{\nu-1}(z_0)\bigr)\bigr)+\omega_X\bigl(F\bigl(L_{\nu-1}(z_0)\bigr),F^{\nu+1}(z_0)\bigr)\\
&\le\omega_X\bigl(L_\nu(z_0),F\bigl(L_{\nu-1}(z_0)\bigr)\bigr)+\omega_X\bigl(L_{\nu-1}(z_0),F^{\nu}(z_0)\bigr)\\
&<\omega_X\bigl(L_\nu(z_0),F\bigl(L_{\nu-1}(z_0)\bigr)\bigr)+1-\frac{1}{2^\nu}\;.
\end{aligned}
\]
Now, since
\[
\begin{aligned}
\omega_X\bigl(L_{\nu-1}(z_0),z_0\bigr)&\le\omega_X\bigl(L_{\nu-1}(z_0),F^\nu(z_0)\bigr)+\omega_X\bigl(F^\nu(z_0),z_0\bigr)\\
&<1+\omega_X\bigl(F^\nu(z_0),z_0\bigr)\;,
\end{aligned}
\]
we have $L_{\nu-1}(z_0)\in D_\nu$. Therefore since $f_\nu\in\mathcal{U}_\nu$ we can use \eqref{eq:I.3.convDW} to get
\[
\omega_X\bigl(L_{\nu}(z_0),F\bigl(L_{\nu-1}(z_0)\bigr)\bigr)=\omega_X\bigl(f_\nu\bigl(L_{\nu-1}(z_0)\bigr),F\bigl(L_{\nu-1}(z_0)\bigr)\bigr)<\frac{1}{2^{\nu+1}}
\]
and thus
\[
\omega_X\bigl(L_\nu(z_0),\,F^{\nu+1}(z_0)\bigr)<\frac{1}{2^{\nu+1}}+1-\frac{1}{2^\nu}=1-\frac{1}{2^{\nu+1}}
\]
as claimed. 

As a consequence for any $z\in X$ we have
\[
\begin{aligned}
\omega_X\bigl(L_\nu(z),&\,z_0\bigr)\\
&\ge \omega_X\bigl(F^{\nu+1}(z_0),z_0\bigr)-\omega_X\bigl(F^{\nu+1}(z_0),L_\nu(z_0)\bigr)-\omega_X\bigl(L_\nu(z_0),L_\nu(z)\bigr)\\
&\ge \omega_X\bigl(F^{\nu+1}(z_0),z_0\bigr)-1-\omega_X(x_0,z)\to+\infty
\end{aligned}
\]
as $\nu\to+\infty$, and thus $\{L_\nu\}$ is compactly divergent. 

Finally, assume that $X\subset\widehat{X}$ is a hyperbolic domain and that $F^\nu\to\tau\in\de X$. Then since
$\omega_X\bigl(L_\nu(z_0),\,F^{\nu+1}(z_0)\bigr)<1$ for all $\nu\in\N$ we can apply Proposition~\ref{th:bP} to get $L_\nu(z_0)\to\tau$.
Moreover, for any $z\in D$ we also have $\omega_X\bigl(L_\nu(z),L_\nu(z_0)\bigr)\le\omega_X(z,z_0)$; so another application of Proposition~\ref{th:bP}
yields $L_\nu(z)\to \tau$ for all~$z\in X$, and we are done.
\end{proof}

Surprisingly enough, if the convergence of  $f_\nu$ to~$F$ is too slow then the left iterated function system might not be convergent, as the next example shows.

\begin{Example}
\label{ex:I.3.convWDbound}
Let $F\in\Hol(\H^+,\H^+)$ be given by $F(w)=w-1$. Moreover, for $\nu\in\N$ put 
\[
\phe_\nu(w)=\frac{\nu w-1}{w+\nu}=\frac{w\cos\theta_\nu-\sin\theta_\nu}{w\sin\theta_\nu+\cos\theta_\nu},
\]
where $\theta_0=\pi/2$ and $\theta_\nu=\arctan\frac{1}{\nu}$ for $\nu\ge 1$. It is easy to check that each $\phe_\nu$ is an elliptic automorphism of~$\H^+$ fixing~$i$; furthermore $\phe_\nu\to\id_{\H^+}$ as $\nu\to+\infty$. 

Now let $g_\nu=\phe_\nu\circ F\circ\phe^{-1}_\nu$. By construction, each $g_\nu$ is a parabolic automorphism of~$\H^+$; furthermore we have
\[
g_\nu(w)=\frac{(\nu^2+\nu+1)w-\nu^2}{w+\nu^2-\nu+1}\;,
\]
and a quick computation shows that $g_\nu(\nu)=\nu$. In particular, for each $w\in\H^+$ as soon as $k$ is sufficiently large we have $g_\nu^k(w)$ arbitrarily close to~$\nu$. Moreover, $g_\nu\to F$ as $\nu\to+\infty$.

We shall now build by induction a sequence $\{f_\nu\}\subset\Aut(\H^+)$ generating a left iterated function system $\{L_\nu\}$ and an increasing sequence $\{\nu_j\}\subset\N$ with the following properties:
\begin{itemize}
\item[(a)] $\nu_0=0$ and $\nu_1=1$;
\item[(b)] $f_0=g_0$ and $f_1=F$;
\item[(c)] $\nu_{2j}\ge\nu_{2j-1}+j$ for all $j\ge 1$;
\item[(d)] $f_{\nu_{2j-1}+1}=\cdots=f_{\nu_{2j}-j}=g_j$ and $f_{\nu_{2j}-j+1}=\cdots=f_{\nu_{2j+1}}=F$ for all $j\ge 1$;
\item[(e)] $|L_{\nu_{2j}}(i)|<1/2^j$ and $|L_{\nu_{2j+1}}(i)|>j$ for all $j\in\N$.
\end{itemize}
Since $|g_0(i)|<1$ and $|F\circ g_0(i)|>0$ condition (e) is satisfied for $j=0$. 
Now choose $n_1\in\N$ such that $|g_1^{n_1}\bigl(L_{\nu_1}(i)\bigr)-1|<1/2$; in particular, $\bigl|(F\circ g_1^{n_1})\bigl(L_{\nu_1}(i)\bigr)\bigr|<1/2$. Choose now $m_1\in\N$
so that $\bigl|(F^{m_1+1}\circ g_1^{n_1})\bigl(L_{\nu_1}(i)\bigr)\bigr|>1$; putting $\nu_2=\nu_1+n_1+1$, $\nu_3=\nu_2+m_1$, $f_{\nu_1+1}=\cdots=f_{\nu_2-1}=g_1$ and $f_{\nu_2}=\cdots=f_{\nu_3}=F$ we get $|L_{\nu_2}(i)|<1/2$ and $|L_{\nu_3}(i)|>2$, that is conditions (c)--(e) are satisfied for $j=1$.

Now given $j\ge 1$ assume by induction that we have found $\nu_0<\cdots<\nu_{2j-1}$ and $f_0,\ldots,f_{\nu_{2j-1}}\in\Aut(\H^+)$ satisfying (a)--(e). Choose $n_j\in\N$ such that
\[
|g_j^{n_j}\bigl(L_{\nu_{2j-1}}(i)\bigr)-j|<\frac{1}{2^j}\;;
\] 
in particular, $\bigl|(F^j\circ g_1^{n_j})\bigl(L_{\nu_{2j-1}}(i)\bigr)\bigr|<1/2^j$. Choose now $m_j\in\N$
so that $\bigl|(F^{m_j+j}\circ g_j^{n_j})\bigl(L_{\nu_{2j-1}}(i)\bigr)\bigr|>j$; putting $\nu_{2j}=\nu_{2j-1}+n_j+j$ and $\nu_{2j+1}=\nu_{2j}+m_j$ and choosing
$f_{\nu_{2j-1}+1},\ldots,f_{\nu_{2j+1}}$ as in (d) we get $|L_{\nu_{2j}}(i)|<1/2^j$ and $|L_{\nu_{2j+1}}(i)|>j$, as required.

In this way we have constructed a sequence $\{f_\nu\}\subset\Aut(\H^+)$ converging (very slowly) to~$F$ generating a left iterated function system with $L_{\nu_{2j}}(i)\to 0$
and $L_{\nu_{2j+1}}(i)\to\infty$ as $j\to+\infty$; in particular $\{L_\nu\}$ does not converge.
\end{Example}

The left iterated function system we constructed in this example, though not converging, it is still compactly divergent. It would be interesting to find an example (or to prove that it does not exists) of a self-map $F\in\Hol(\D,\D)$ such that $\{F^\nu\}$ is compactly divergent and of a sequence $\{f_\nu\}$ converging to~$F$ so that $\{L_\nu\}$ is not compactly divergent. 

Finally, there is no hope to get a version of Theorem~\ref{th:I.3.convWDbound} for right iterated function systems:

\begin{Example}
\label{ex:I.3.convWDboundb}
Let $F\in\Hol(\H^+,\H^+)$ be given by $F(w)=w+1$, and define $\{f_\nu\}\subset\Hol(\H^+,\H^+)$ by setting $f_0(w)=i+e^{2\pi i z}$ and $f_\nu=F$ for $\nu\ge 1$. Then 
$f_\nu\to F$ in the fastest possible way but $R_\nu=f_0$ for all $\nu\in\N$, and thus $\{R_\nu\}$ is not even compactly divergent.
\end{Example}

We are left with the case when $F$ is a periodic or pseudoperiodic automorphism of~$X$. If the fundamental group of~$X$ is not abelian, Theorem~\ref{th:I.2.unocinque} implies that $F$ is isolated in~$\Hol(X,X)$; therefore any map close enough to~$F$ coincides with~$F$ and the study of random iteration of functions sufficiently close to~$F$ reduces to the study of the dynamics of~$F$, which is trivial. 

If the fundamental group of~$X$ is abelian, we know that $X$ is biholomorphic either to~$\D$ or to~$\D^*$ or to an annulus~$A(r,1)$ with $0<r<1$.
If $X$ is biholomorphic to~$\D^*$ then every holomorphic self-map of~$\D^*$ extends to a holomorphic self-map of~$\D$, and thus random iteration on~$\D^*$ reduces to random iteration on~$\D$. 

If $X$ is biholomorphic to an annulus $A(r,1)$ then Proposition~\ref{th:I.2.unootto} says that $\Aut(X)$ is open in~$\Hol(X,X)$; in particular, maps sufficiently close to~$F$ are automorphisms of~$X$. The group $\Aut\bigl(A(r,1)\bigr)$ has two connected components, $A_1=\{\phe_{\theta,1}\mid \theta\in\R\}$
and $A_{-1}=\{\phe_{\theta,-1}\mid \theta\in\R\}$, where $\phe_{\theta,1}(z)=e^{i\theta}z$ and $\phe_{\theta,-1}(z)=e^{i\theta}rz^{-1}$.
If $F\in A_1$ then every holomorphic self-map of~$A(r.1)$ sufficiently close to~$F$ belongs to~$A_1$; since $A_1\subset\Aut(\D)$, in this case random iteration of holomorphic self-maps close to~$F$ is reduced to random iteration on~$\D$. If instead $F\in A_{-1}$ then every holomorphic self-map of~$A(r.1)$ sufficiently close to~$F$ belongs to~$A_{-1}$. Since it is easy to check that
\[
\phe_{\theta,-1}\circ\phe_{\eta,-1}=\phe_{\theta-\eta, 1}\;,\ \phe_{\theta,1}\circ\phe_{\eta,-1}=\phe_{\theta+\eta,-1}\ \hbox{and}\ 
 \phe_{\theta,-1}\circ\phe_{\eta,1}=\phe_{\theta-\eta,-1}\;,
 \]
we see that every iterated function system generated by self-maps close enough to~$F$ splits in the union of an iterated function system contained in~$A_1$, obtained considering an even number of maps, and of the composition of $\phe_{0,-1}$ with an iterated function system again contained in~$A_1$, obtained considering an odd number of maps. Thus in this case too we are led to the study of random iteration in~$\D$.

Summing up, when $F$ is a periodic or pseudoperiodic automorphism of~$X$ for our aims we can safely assume that $X=\D$. This situation has been studied in~\cite{ChristodoulouShort2019}; for the sake of completeness we conclude this paper by reporting the main result in this case, referring to \cite{ChristodoulouShort2019}
for proofs, examples and comments.

\begin{Theorem}
\label{th:I.2.convell}
Let $F\in\Aut(\D)$ be a periodic or pseudoperiodic (and hence elliptic) automorphism of~$\D$; the case $F=\id_\D$ is allowed. Let $\{f_\nu\}\subset\Hol(\D,\D)$ be a sequence of non-constant holomorphic self-maps
of~$\D$ for which 
\begin{equation}
\sum_{\nu=0}^\infty \omega\bigl(f_\nu(a),F(a)\bigr)<+\infty\quad\hbox{and}\quad \sum_{\nu=0}^\infty \omega\bigl(f_\nu(b),F(b)\bigr)<+\infty
\label{eq:I.3.convell}
\end{equation}
for two distinct points $a$, $b\in\D$. Then $f_\nu\to F$ as $\nu\to+\infty$, and the sequences $\{F^{-\nu}\circ L_\nu\}$ and $\{R_\nu\circ F^{-\nu}\}$ converge to non-constant
holomorphic self-maps of~$\D$. 
\end{Theorem}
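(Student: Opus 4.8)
The plan is to reduce both statements, by conjugating with the fixed automorphisms $F^{\pm\nu}$, to the study of left- and right-composition of maps tending to $\id_\D$ in a summable way, and then to run a Schwarz--Pick telescoping argument for the convergence and a hyperbolic-derivative argument for the non-triviality of the limits. First I would prove $f_\nu\to F$: by \eqref{eq:I.3.convell} we have $f_\nu(a)\to F(a)$ and $f_\nu(b)\to F(b)$, and since $\Hol(\D,\D)$ is a normal family every subsequence admits a locally uniformly convergent sub-subsequence with limit $g\colon\D\to\overline\D$. Such a $g$ takes the value $F(a)\in\D$, so it is not a unimodular constant, and it cannot be any constant $c$, since then $F(a)=c=F(b)$ would contradict the injectivity of $F$; hence $g$ is a non-constant element of $\Hol(\D,\D)$, $F^{-1}\circ g$ fixes the two distinct points $a,b$, and the equality case of the Schwarz--Pick lemma (Theorem~\ref{th:SPhyp}) forces $F^{-1}\circ g=\id_\D$. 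Thus every subsequential limit is $F$, so $f_\nu\to F$.

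The technical heart is a rigidity lemma: \emph{for every compact $K\subset\D$ there is $C_K>0$ with}
\[
\sup_{z\in K}\omega\bigl(g(z),F(z)\bigr)\le C_K\bigl(\omega(g(a),F(a))+\omega(g(b),F(b))\bigr)\qquad\text{for all }g\in\Hol(\D,\D).
\]
The finiteness of the best such constant is again a normal-families/rigidity statement: if it failed there would be maps $g_n$ with $m(g_n):=\max(\omega(g_n(a),F(a)),\omega(g_n(b),F(b)))\to0$ (whence, by Schwarz--Pick, $g_n(K)$ stays in a fixed compact, so $\sup_K\omega(g_n,F(\cdot))$ remains bounded) but with $\sup_K\omega(g_n,F(\cdot))/m(g_n)\to\infty$; a limit of the $g_n$ is $\equiv F$ exactly as above, which forces $\sup_K\omega(g_n,F(\cdot))\to0$, and one must rule out a genuine difference of rates between numerator and denominator — this uses that holomorphic variations of $F$ inside $\Hol(\D,\D)$ vanishing at $a$ and $b$ are themselves forced to be small, a consequence of the boundary behaviour of self-maps. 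Together with \eqref{eq:I.3.convell} this lemma gives $\sum_\nu\sup_{z\in K}\omega(f_\nu(z),F(z))<+\infty$ for \emph{every} compact $K\subset\D$.

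Conjugating $F$ by a fixed automorphism (which replaces $a,b$ by two other distinct points and affects none of the conclusions) I may assume $F$ fixes $0$, so every power $F^{\pm k}$ fixes each ball $B_\D(0,R)$. Setting $h_j=F^{-(j+1)}\circ f_j\circ F^{\,j}$ and $\tilde f_j=F^{\,j}\circ f_j\circ F^{-(j+1)}$, a telescoping computation gives
\[
F^{-\nu}\circ L_\nu=F\circ\bigl(h_\nu\circ\cdots\circ h_0\bigr),\qquad R_\nu\circ F^{-\nu}=\bigl(\tilde f_0\circ\cdots\circ\tilde f_\nu\bigr)\circ F,
\]
and since $\omega(h_j(z),z)=\omega(f_j(v),F(v))$ with $v=F^{\,j}(z)\in B_\D(0,R)$ for $z\in B_\D(0,R)$ — and similarly for $\tilde f_j$ — the lemma yields $\sum_j\sup_{B_\D(0,R)}\omega(h_j(\cdot),\cdot)<+\infty$ and the same for $\tilde f_j$, for every $R$; in particular $h_j,\tilde f_j\to\id_\D$, and each is non-constant because each $f_\nu$ is. For the right system $\tilde R_\nu=\tilde f_0\circ\cdots\circ\tilde f_\nu$ one has $\omega(\tilde R_{\nu+1}(z),\tilde R_\nu(z))\le\omega(\tilde f_{\nu+1}(z),z)$ by Schwarz--Pick, so $\{\tilde R_\nu(z)\}$ is $\omega$-Cauchy for each $z$, hence convergent; Vitali (Theorem~\ref{th:Vitali}) gives $\tilde R_\nu\to\tilde R_\infty\in\Hol(\D,\D)$ locally uniformly. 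For the left system $\tilde L_\nu=h_\nu\circ\cdots\circ h_0$ one first checks, by induction from the summability above, that the partial images of a fixed point stay in a fixed ball, so that $\omega(\tilde L_{\nu+1}(z),\tilde L_\nu(z))$ is again dominated by a summable sequence, and Vitali gives $\tilde L_\nu\to\tilde L_\infty\in\Hol(\D,\D)$. Undoing the conjugation, $\{F^{-\nu}\circ L_\nu\}$ and $\{R_\nu\circ F^{-\nu}\}$ converge locally uniformly.

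The remaining point — which I expect to be the real obstacle, since it is where the summability of \eqref{eq:I.3.convell} is used beyond $f_\nu\to F$ — is that $\tilde R_\infty$ and $\tilde L_\infty$ are non-constant. I would split off a fixed initial block, $\tilde R_\nu=P\circ Q_\nu$ with $P=\tilde f_0\circ\cdots\circ\tilde f_{N-1}$ a fixed non-constant self-map and $Q_\nu=\tilde f_N\circ\cdots\circ\tilde f_\nu$, so it suffices to show $Q_\infty=\lim Q_\nu$ is non-constant (and symmetrically for the left system). For this I would use the hyperbolic derivative $\lambda_g(z)=\frac{(1-|z|^2)g'(z)}{1-|g(z)|^2}$, which is multiplicative under composition and has $|\lambda_g|\le1$, together with the sub-estimate: \emph{if $\sup_{B_\D(0,R+1)}\omega(g(\cdot),\cdot)\le\delta$ then $\bigl|\log|\lambda_g(z)|\bigr|\le C_R\delta$ on $B_\D(0,R)$} — which holds because such a $g$ is uniformly close to $\id_\D$ on the compact $\overline{B_\D(0,R+1)}$, hence $g'$ is close to $1$ on $\overline{B_\D(0,R)}$ by Cauchy's estimates. (It is exactly here that one needs closeness to the identity on a genuine compact, not merely at one point: otherwise a collapse of the type $z\mapsto z^2$ would be possible, which is ruled out precisely by the lemma.) Since the partial images of a fixed point under the tail maps stay in a fixed ball $B_\D(0,R)$, the chain rule gives $|\lambda_{Q_\nu}(z)|=\prod_{j=N}^{\nu}|\lambda_{\tilde f_j}(p_j)|\ge\exp\bigl(-C_R\sum_{j\ge N}\sup_{B_\D(0,R+1)}\omega(\tilde f_j(\cdot),\cdot)\bigr)=:\kappa>0$, so in the limit $|\lambda_{Q_\infty}|\ge\kappa$ on a ball; hence $Q_\infty$ has non-vanishing derivative there and is non-constant, and $\tilde R_\infty=P\circ Q_\infty$, a composition of non-constant holomorphic self-maps, is non-constant. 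The same argument with the $h_j$ handles $\tilde L_\infty$, and undoing the conjugation completes the proof.
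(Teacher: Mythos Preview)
The paper does not prove this theorem at all; it merely states it and refers the reader to \cite{ChristodoulouShort2019} for the proof. So there is no ``paper's own proof'' to compare your attempt to, and your outline has to be judged on its own merits.

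Your overall architecture is sound: the reduction of $f_\nu\to F$ via normal families and two-point Schwarz--Pick rigidity is correct; the conjugation identities $F^{-\nu}\circ L_\nu=F\circ(h_\nu\circ\cdots\circ h_0)$ and $R_\nu\circ F^{-\nu}=(\tilde f_0\circ\cdots\circ\tilde f_\nu)\circ F$ are right; and once summability of $\sup_{B(0,R)}\omega(h_j(\cdot),\cdot)$ and $\sup_{B(0,R)}\omega(\tilde f_j(\cdot),\cdot)$ is known, the telescoping argument for convergence and the hyperbolic-derivative argument for non-constancy both go through (your Cauchy-estimate justification for $|\log|\lambda_g||\le C_R\delta$ is fine on compacts).

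The genuine gap is exactly where you flag it: the ``technical lemma''
\[
\sup_{z\in K}\omega\bigl(g(z),F(z)\bigr)\le C_K\bigl(\omega(g(a),F(a))+\omega(g(b),F(b))\bigr)
\]
for all $g\in\Hol(\D,\D)$. Your normal-families argument only yields $\sup_K\omega(g_n,F)\to 0$ when the right-hand side tends to~$0$; it says nothing about the \emph{rate}, and your sentence about ``holomorphic variations~\dots\ boundary behaviour'' is not a proof. The lemma is nevertheless true, and here is one way to close the gap. Conjugating, take $F=\id_\D$ and $a=0$. Given $g$, set $\alpha=g(0)$ and $h=\psi_\alpha\circ g$ with $\psi_\alpha(w)=(w-\alpha)/(1-\bar\alpha w)$, so $h(0)=0$ and $h(z)=z\,\phi(z)$ with $\phi\colon\D\to\overline\D$; passing from $g$ to $h$ costs $O\bigl(\omega(g(0),0)\bigr)$ on compacts. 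Now $w:=1-\phi$ maps $\D$ holomorphically into the disk $D(1,1)=\{|\zeta-1|<1\}$, whose boundary contains~$0$. The M\"obius map $\sigma(\zeta)=\zeta/(2-\zeta)$ sends $D(1,1)$ conformally onto the right half-plane, and in the right half-plane the hyperbolic ball of radius $R$ about $\zeta_0$ is contained in $\{|\zeta|\le e^{R}|\zeta_0|\}$. Applying Schwarz--Pick to $\sigma\circ w$ gives $|\sigma(w(z))|\le e^{\omega_\D(z,b)}|\sigma(w(b))|$, hence $|1-\phi(z)|\le C\,e^{\omega_\D(z,b)}\,|1-\phi(b)|$, which is the desired linear estimate on compacts. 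This Harnack-type step is the missing ingredient; once it is in place, your proof is complete.
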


\bigbreak

\footnotesize
\noindent 2020 Mathematics Subject Classification: 37H12 (primary);  37F99, 30D05 (secondary).

\noindent\textit{Keywords:} Wolff-Denjoy theorem; random iteration; iterated function system; Bloch domain.

\begin{bibdiv}
\begin{biblist}

\bib{Abate}{book}{
	author={Abate, Marco},
	title={Iteration theory of holomorphic maps on taut manifolds},
	publisher={Mediterranean Press},
	address={Rende},
	year={1989}
}

%
%
%

%

\bib{BeardonCarneMindaNg2004}{article}{
	author={Beardon, A.F.},
	author={Carne, T.K.},
	author={Minda, D.},
	author={Ng, T.W.},
	title={Random iteration of analytic maps},
	journal={Ergodic Theory Dynam. Systems},
	volume={24},
	year={2004},
	pages={659--675}
}


\bib{BeardonMinda2007}{article}{
	author={Beardon, A.F.},
	author={Minda, D.},
	title={The hyperbolic metric and geometric function theory},
	book={
	title={Quasiconformal mappings and their applications},
	publisher={Narosa},
	address={New Delhi},
	year={2007},
	},
	pages={9--56}
}

%
%
%

\bib{ChristodoulouShort2019}{unpublished}{
	author={Christodoulou, A.},
	author={Short, I.},
	title={Stability of the Denjoy-Wolff theorem},
	note={Preprint, arXiv:1907.09366v1},
	year={2019}
}


\bib{Denjoy1926}{article}{
	author={Denjoy, A.},
	title={Sur l'it\'eration des fonctions analytiques},
	journal={C.R. Acad. Sci. Paris},
	volume={182},
	year={1926},
	pages={255--257}
}

%
%

%

%
\bib{Gill1991a}{article}{
	author={Gill, John},
	title={The use of the sequence $F_n(z)=f_n\circ\cdots\circ f_1(z)$ in computing fixed points of continued fractions, products, and series},
	journal={Appl. Numer. Math.},
	volume={8},
	year={1991}, 
	pages={469--476}
}

\bib{Gill2012}{article}{
	author={Gill, John},
	title={Convergence of infinite compositions of complex functions},
	journal={Comm. Anal. Theory Contin. Fractions},
	volume={19},
	year={2012},
	pages={1--27}
}

\bib{Gill2017}{unpublished}{
	author={Gill, John},
	title={A primer on the elementary theory of infinite compositions of complex functions},
	note={Preprint, https://www.researchgate.net/profile/John\_Gill6/publications},
	year={2017}
}


\bib{Heins1941a}{article}{
	author={Heins, M.H.},
	title={On the iteration of functions which are analytic and  single-valued in a given multiply-connected region},
	journal={Am. J. Math.},
	volume={63},
	year={1941},
	pages={461--480}
}

\bib{Heins1941b}{article}{
	author={Heins, M.H.},
	title={A generalization of the Aumann-Carath\'eodory Starrheitssatz},
	journal={Duke Math. J.},
	volume={8},
	year={1941},
	pages={312--316}
}

\bib{Heins1988}{article}{
	author={Heins, M.H.},
	title={A theorem of Wolff-Denjoy type},
	book={
	title={Complex analysis},
	publisher={Birkh\"auser},
	address={Basel},
	year={1988},
	},
	pages={81--86}
}

%
%
%
%
\bib{KeenLakic2007}{book}{
	author={Keen, L.},
	author={Lakic, N.},
	title={Hyperbolic Geomtery from a Local Viewpoint},
	publisher={Cambridge University Press},
	address={Cambridge},
	year={2007}
}
%

\bib{Lee2000}{book}{
	author={Lee, J.M.},
	title={Introducton to topological manifolds},
	publisher={Springer-Verlag},
	address={Berlin},
	year={2000}
}

%
%
%
%

\bib{Milnor2000}{book}{
	author={Milnor, J.},
	title={Dynamics in One Complex Variable},
	edition={Third edition},
	publisher={Princeton University Press},
	address={Princeton, NJ},
	year={2006}
}

\bib{Wolff1926a}{article}{
	author={Wolff, J.},
	title={Sur l'it\'eration des fonctions holomorphes dans une r\'egion, et dont les valeurs appartiennent \`a cette r\'egion},
	journal={C.R. Acad. Sci. Paris},
	volume={182},
	year={1926},
	pages={42--43}
}

\bib{Wolff1926b}{article}{
	author={Wolff, J.},
	title={Sur l'it\'eration des fonctions born\'ees},
	journal={C.R. Acad. Sci. Paris},
	volume={182},
	year={1926},
	pages={200--201}
}

\bib{Wolff1926c}{article}{
	author={Wolff, J.},
	title={Sur une g\'en\'eralisation d'un th\'eor\`eme de Schwarz},
	journal={C.R. Acad. Sci. Paris},
	volume={182},
	year={1926},
	pages={918--920}
}


\end{biblist}
\end{bibdiv}

\end{document}